\documentclass[11pt]{amsart}
\usepackage{amsfonts,amssymb,amsthm}
\usepackage[numbers,sort&compress]{natbib}
\usepackage[leqno]{amsmath}
\usepackage[mathscr]{euscript}
\usepackage{color}
\usepackage{paralist}
\usepackage{hyperref}
\usepackage{framed}
\usepackage{caption}
\setlength{\oddsidemargin}{0in} \setlength{\evensidemargin}{0in}
\setlength{\textwidth}{6.5in}
\setlength{\topmargin}{0.in} \setlength{\textheight}{8.75in} \setlength{\footskip}{.5in}
\setlength{\headheight}{.0in}

\newtheorem {theorem}{Theorem}[section]
\newtheorem {proposition}{Proposition}[section]

\newtheorem {lemma}{Lemma}[section]
\newtheorem {example}{Example}[section]
\newtheorem {definition}{Definition}[section]
\newtheorem {remark}{Remark}[section]

\pagestyle{plain}

\title{The Charnes--Cooper transformation and fractional programs with convex polynomials}

\author{Chengmiao Yang}
\address[Chengmiao Yang]{Academy for Advanced Interdisciplinary Studies, Northeast Normal University, Changchun 130024, Jilin Province, People's Republic of China}
\email{cmyang@nenu.edu.cn}

\author{Liguo Jiao}
\address[Liguo Jiao]{Academy for Advanced Interdisciplinary Studies, Northeast Normal University, Changchun 130024, Jilin Province, People's Republic of China}
\email{jiaolg356@nenu.edu.cn; hanchezi@163.com}

\author{Jae Hyoung Lee$^{\dag}$}
\address[Jae Hyoung Lee]{Department of Applied Mathematics, Pukyong National University, Busan 48513, Korea}
\email{mc7558@naver.com}

\thanks{$^\dag$Corresponding Author}

\date{\today}

\begin{document}

\begin{abstract}
This paper proposes a scheme based on the Charnes--Cooper transformation for solving a class of fractional programs with convex polynomial data.
We employ the high-degree perturbation method to construct relaxations based on sums of squares for the fractional programs under consideration.
Under suitable conditions, we establish results on solution existence, strong duality, asymptotic convergence, and solution extraction.
As a special case, fractional programs with SOS-convex polynomial data are also studied.
\end{abstract}

\keywords{Fractional programs; Charnes--Cooper variable transformation; convex polynomials; high-degree perturbation method; semidefinite programming.}

\maketitle

\section{Introduction}\label{sect:1}

Fractional programming is an important class of nonlinear optimization in which the objective function is expressed as a ratio of two functions.
Such models arise naturally in economics, engineering management, and operations research, where the objective often represents a ratio between two performance measures; see, e.g., \cite{Bajalinov2003,Schaible2003,StancuMinisian2012}.
A general fractional program can be written as
\begin{equation}\label{equ0}
r^*:=\inf_{x\in\Omega}\frac{f(x)}{g(x)},
\end{equation}
where $\Omega$ is a nonempty subset of $\mathbb{R}^n$ and the denominator $g$ is assumed to be positive on $\Omega$.
A notable feature of problem~\eqref{equ0} is that the ratio $f/g$ is generally nonconvex, even when $f$ is convex and $g$ is concave.

One of the most classical and influential approaches to fractional programming is the Dinkelbach method, introduced by Dinkelbach~\cite{Dinkelbach1967}.
Under standard assumptions, the optimal value $r^*$ is characterized by
\begin{equation*}
\inf_{x\in\Omega}\left\{f(x)-r^*g(x)\right\}=0.
\end{equation*}
This characterization leads to an iterative procedure in which, at the $k$th iteration, one solves the non-fractional subproblem
\begin{equation*}
\min_{x\in\Omega}\left\{f(x)-r_k g(x)\right\}
\end{equation*}
and updates the parameter according to
\begin{equation*}
r_{k+1}:=\frac{f(x_{k+1})}{g(x_{k+1})}.
\end{equation*}
Thus, the Dinkelbach method replaces the fractional problem with a sequence of parameterized non-fractional subproblems.
Although this framework is effective for many structured fractional programs, each iteration still requires solving an optimization subproblem that can itself be computationally demanding.
Several extensions of the Dinkelbach-type method have been developed; see, e.g., \cite{MR4810562}.

Another classical approach is the Charnes--Cooper transformation (CCT), which reformulates a fractional program in terms of lifted variables; see \cite{MR152370,Chen2005JOTA,schaible1974}.
Unlike Dinkelbach-type methods, the CCT produces a single reformulated problem and does not require an iteratively updated ratio parameter.
In this sense, the CCT reformulation is \emph{parameter-free}.
Under suitable convexity assumptions, the resulting problem can be expressed as a convex optimization problem involving perspective functions, at the cost of introducing an additional variable and additional constraints arising from the transformation.

First-order iterative methods have also been developed for fractional programs with nonsmooth or structured data.
Proximal-gradient and proximal-subgradient algorithms are particularly suited to problems involving nonsmooth terms \cite{Bot2017,Bot2}, whereas splitting and block-coordinate methods can exploit separable or decomposable structures in the numerator and denominator \cite{Bot1,Bot3}.
These approaches differ from the Dinkelbach method and the CCT in that they directly use first-order information and proximal operations to generate a sequence of approximate solutions.

For fractional programs with quadratic or polynomial data, semidefinite programming methods provide another important class of approaches.
Nguyen, Sheu, and Xia~\cite{Nguyen2015} developed an SDP approach for quadratic fractional programs.
Jiao and Lee~\cite{Jiao2019} established exact SDP relaxations for fractional optimization problems involving support functions.
Guo and Jiao~\cite{Guo2021} studied fractional semi-infinite polynomial programming problems and developed conic and SDP relaxations with asymptotic convergence.
Guo and Zhang~\cite{Guo2024} subsequently proposed an SDP method for fractional semi-infinite programs with SOS-convex polynomials.
More recently, Yang, Jiao, and Lee~\cite{Yang2024} considered fractional programs with SOS-convex semi-algebraic functions and developed a parameter-free SDP-based approach together with a solution
extraction result.

The present paper studies the finitely constrained fractional polynomial program
\begin{equation*}
\inf_{x\in\mathbb{R}^n} \left\{\frac{f(x)}{g(x)} \mid h_i(x)\leq 0, \ i=1,\ldots,m \right\},
\end{equation*}
where $f$, $-g$, and $h_i$, $i=1,\ldots,m$, are convex polynomials.
In contrast to the SOS-convex setting considered in \cite{Yang2024}, the polynomial data in the present paper are not required to be SOS-convex.
Our setting is therefore more general with respect to polynomial convexity, although it is restricted to polynomial functions rather than general SOS-convex semi-algebraic functions.

Removing the SOS-convexity assumption gives rise to an important difficulty.
After applying the CCT and forming the Lagrange dual, one obtains a global nonnegativity constraint on a convex polynomial.
A globally nonnegative convex polynomial, however, need not be a sum of squares.
Consequently, the exact one-step SDP reformulation available in the SOS-convex setting cannot generally be applied.

To overcome this difficulty, we combine the convex perspective reformulation obtained from the CCT with the high-degree perturbation method of Lasserre and Netzer~\cite{Lasserre2007}.
The perturbation method approximates a globally nonnegative polynomial by sums of squares and consequently yields a hierarchy of SDP relaxations.
We establish the asymptotic convergence of this hierarchy and derive a moment-type dual problem from which an optimal solution can be extracted under a rank-one condition.
When all the polynomial data are SOS-convex, the perturbation term is unnecessary and the general hierarchy reduces to an exact SDP relaxation at the first level.

\subsection*{Contributions}

Our main contributions, relative to the above literature, are summarized as follows.
\begin{enumerate}[\upshape (i)]
\item By combining the Charnes--Cooper transformation with the Lasserre--Netzer high-degree perturbation method, we construct a hierarchy of SDP relaxations for fractional programs in which $f$, $-g$, and $h_i$, $i=1,\ldots,m$, are convex polynomials that need not be SOS-convex.
\item Under the nonemptiness and positivity assumptions on the feasible set, we establish the existence of an optimal solution provided that the denominator $g$ is bounded above on the feasible set.
We also prove the existence of an optimal solution to the corresponding CCT reformulation and show that the two problems have the same optimal value.
\item Under the Slater condition, we establish strong duality between the CCT reformulation and its Lagrange dual.
When the feasible set is bounded, we prove the asymptotic convergence of the proposed SDP hierarchy.
We further derive a moment-type dual problem and provide a rank-one sufficient condition for finite convergence and solution extraction.
\item When $f$, $-g$, and $h_i$, $i=1,\ldots,m$, are SOS-convex, we show that the perturbation term is unnecessary and that the first SDP relaxation is exact.
Moreover, an optimal solution can be extracted from an optimal moment solution satisfying $\bar y_0\neq 0$.
\end{enumerate}

The organization of this paper is as follows.
Section~\ref{sect::2} introduces the notation and preliminary results used throughout the paper.
Section~\ref{sect::3} presents the Charnes--Cooper transformation and establishes the relationships among the original fractional problem and its perspective reformulations.
Section~\ref{sect::4} contains our main results, including the existence of optimal solutions, the Lagrange dual formulation, the proposed SDP relaxation hierarchy, its asymptotic convergence, and the
solution extraction result.
An illustrative example is also provided in this section.
The SOS-convex special case is discussed in Section~\ref{sos-convex}.
Finally, Section~\ref{sect::5} concludes the paper.

\section{Preliminaries}\label{sect::2}

\subsection{Basic notation}
Here, we collect some notation and preliminary results that will be used in this paper.
Throughout the paper, the space $\mathbb{R}^{n}$ is equipped with the standard Euclidean norm, i.e., $\|x\| = \sqrt{x_{1}^{2} + \cdots + x_{n}^{2}}$ for all $x := \left(x_{1}, \ldots, x_{n}\right) \in \mathbb{R}^{n}.$
Let $n$ be a positive integer, and let $\mathbb{N}$ denote the set of nonnegative integers.
$\mathbb{R}_{+}^{n}$ denotes the nonnegative orthant:
\begin{equation*}
\mathbb{R}_{+}^{n} := \left\{x \in \mathbb{R}^{n} \mid x_{i} \geq 0,\ i = 1, \ldots, n\right\}.
\end{equation*}
Given a set $A \subseteq \mathbb{R}^{n},$ we say $A$ is convex whenever $\gamma a_{1} + (1 - \gamma) a_{2} \in A$ for all $\gamma \in[0,1],$ $a_{1}, a_{2} \in A.$
A fundamental property of convex sets is given by the separating hyperplane theorem, which states that nonempty disjoint convex sets can be separated by a hyperplane as shown below.
\begin{lemma}[{\rm Separating hyperplane theorem~\cite[Section 2.5]{Boyd2004}}]\label{SHT}
Suppose $C \subset \mathbb{R}^{n}$ and $D \subset \mathbb{R}^{n}$ are nonempty disjoint convex sets$,$ i.e.$,$ $C \cap D = \emptyset.$
Then there exist $0 \neq a \in \mathbb{R}^n$ and $ b\in \mathbb{R}$ such that $a^{T} x \leq b$ for all $x \in C$ and $ a^{T} x \geq b$ for all $x \in D.$
\end{lemma}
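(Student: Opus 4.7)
The plan is to reduce the two-set separation problem to separating a single point from a single convex set. Define the Minkowski difference $E := C - D = \{c - d : c \in C,\ d \in D\}$, which is convex as a sum of convex sets, and satisfies $0 \notin E$ by the hypothesis $C \cap D = \emptyset$. Any nonzero $a \in \mathbb{R}^n$ with $a^T e \geq 0$ for every $e \in E$ yields $a^T c \geq a^T d$ for all $c \in C$ and $d \in D$; choosing $b$ to be any real number between $\sup_{d \in D} a^T d$ and $\inf_{c \in C} a^T c$ then produces the required separating hyperplane. Thus the problem reduces to exhibiting a nonzero $a$ that weakly separates the origin from the convex set $E$.

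To separate $\{0\}$ from $E$, I would distinguish two cases depending on whether $0 \in \overline{E}$. In the first case, $0 \notin \overline{E}$. Since $\overline{E}$ is a nonempty closed convex set, the nearest-point projection theorem supplies a unique $e_0 \in \overline{E}$ minimizing $\|e\|$ over $\overline{E}$; taking $a := e_0 \neq 0$ and using the variational characterization $(e - e_0)^T e_0 \geq 0$ for every $e \in \overline{E}$ gives $a^T e \geq \|e_0\|^2 > 0$, so in fact strict (and a fortiori weak) separation holds. In the second case, $0 \in \overline{E} \setminus E$. Here I would exploit the standard fact that for a convex set in $\mathbb{R}^n$ the interior of the closure coincides with the interior of the set itself whenever the latter is nonempty (the empty-interior case is handled separately by taking $a$ normal to the affine hull of $E$). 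Consequently $0$ lies on the boundary of $\overline{E}$, and one can select a sequence $y_k \to 0$ with $y_k \notin \overline{E}$. Applying Case~1 to each $y_k$ versus $\overline{E}$ yields unit vectors $a_k$ with $a_k^T(e - y_k) \geq 0$ for all $e \in \overline{E}$, and extracting a convergent subsequence $a_{k_j} \to a$ via compactness of the unit sphere produces a nonzero limit satisfying $a^T e \geq 0$ for every $e \in E$.

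The main obstacle will be the boundary case $0 \in \overline{E}$, because the projection theorem cannot be applied directly to the origin and instead one must approximate by exterior points and invoke a compactness argument on the unit sphere. The subtle technical point is justifying the existence of approximating points outside $\overline{E}$, which amounts to ruling out $0 \in \mathrm{int}(\overline{E})$; this is exactly where the interior-of-closure identity for convex sets is indispensable. Once this limiting machinery is in place, recovering the scalar $b$ and translating the separation of $\{0\}$ from $E$ back into a hyperplane separating $C$ and $D$ is a purely mechanical step.
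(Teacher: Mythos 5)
The paper does not prove this lemma at all: it is quoted verbatim as a known result with a citation to Boyd and Vandenberghe, so there is no in-paper argument to compare against. Your proposal is the standard, complete proof and it is correct. You reduce to separating the origin from the Minkowski difference $E=C-D$, handle $0\notin\overline{E}$ by nearest-point projection with the variational inequality $e_0^{T}e\ge\|e_0\|^{2}>0$, and handle the boundary case $0\in\overline{E}\setminus E$ by approximating with exterior points $y_k\to 0$ and passing to a subsequential limit of unit normals; the justification that such $y_k$ exist via $\operatorname{int}(\overline{E})=\operatorname{int}(E)$ (with the empty-interior case dispatched through the affine hull) is exactly the delicate point, and you identify and resolve it correctly. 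Two cosmetic remarks: your construction yields $a^{T}x\ge b$ on $C$ and $a^{T}x\le b$ on $D$, so you should either set $E:=D-C$ or replace $(a,b)$ by $(-a,-b)$ to match the orientation in the statement; and it is worth noting that your argument is actually more complete than the cited source, since Boyd and Vandenberghe only prove the case where the distance between the two sets is attained (e.g., one set closed and the other compact) and relegate the general disjoint case to an exercise. No gaps.
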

For an extended real-valued function $f$ on $\mathbb{R}^{n},$ we denote its domain and epigraph by $\operatorname{dom}f:= \left\{x \in \mathbb{R}^{n} \mid f(x) < +\infty \right\}$ and $\operatorname{epi}f:= \left\{(x, r) \in \mathbb{R}^{n} \times \mathbb{R} \mid f(x) \leq r\right\},$ respectively.
A function $f:\mathbb{R}^{n} \to \mathbb{R} \cup \{+\infty\}$ is said to be convex if $\operatorname{epi} f$ is a convex set in $\mathbb{R}^{n+1}.$

\subsection{Real polynomials}
Now, let us recall some notation and basic facts on SDP problems and SOS polynomials, where SDP and SOS stand for semidefinite programming and sum of squares, respectively.
Let $S^{n}$ denote the space of symmetric $n \times n$ matrices with the trace inner product and $``\succeq"$ denote the L\"owner partial order of $ S^{n},$ that is, for $M,\, N \in S^{n},\, M \succeq N$ if and only if $M-N$ is positive semidefinite.
The set consisting of all positive semidefinite $n \times n$ matrices is denoted by $S_{+}^{n}.$
For $ X, Y \in S^{n},\langle X, Y\rangle:=\operatorname{tr}(X Y),$ where ``tr" denotes the trace of a matrix.

Let $\mathbb{R}[x]$ denote the ring of real polynomials in the variable $x.$
The space of all real polynomials in the variables $x$ with degree at most $d$ is denoted by $\mathbb{R}[x]_{d}.$
The degree of a polynomial $f$ is denoted as $\operatorname{deg} f.$
We say that a real polynomial $f$ is SOS if it can be written as $f(x)=\sum_{j=1}^{l}f_{j}(x)^{2},\, x \in \mathbb{R}^{n}$ with $f_{j} \in \mathbb{R}[x].$
The cone of SOS polynomials of degree at most $2d$ is denoted by $\Sigma[x]_{2d}$.

For a multi-index $\alpha=(\alpha_1,\ldots,\alpha_n)\in\mathbb{N}^n$, let
\begin{equation*}
|\alpha|:=\sum_{i=1}^n\alpha_i, \quad \mathbb{N}_d^n:=\{\alpha\in\mathbb{N}^n\mid |\alpha|\leq d\},
\end{equation*}
and write
\begin{equation*}
x^\alpha:=x_1^{\alpha_1}\cdots x_n^{\alpha_n}.
\end{equation*}
Fix an ordering of $\mathbb{N}_d^n$ and define the standard monomial vector
\begin{equation*}
\lceil x\rceil_d:=\left(x^\alpha\right)_{\alpha\in\mathbb{N}_d^n},
\end{equation*}
whose dimension is $s(n,d):=\binom{n+d}{n}$.

For each $\alpha\in\mathbb{N}_{2d}^n$, define the symmetric matrix $\mathcal{B}_\alpha$ by
\begin{equation*}
\lceil x\rceil_d\lceil x\rceil_d^T =\sum_{\alpha\in\mathbb{N}_{2d}^n}\mathcal{B}_\alpha x^\alpha.
\end{equation*}
Given $y=(y_\alpha)_{\alpha\in\mathbb{N}_{2d}^n}$, its moment matrix of order $d$ is defined by
\begin{equation*}
\mathbf{M}_d(y):=\sum_{\alpha\in\mathbb{N}_{2d}^n}y_\alpha\mathcal{B}_\alpha.
\end{equation*}
Equivalently,
\begin{equation*}
\mathbf{M}_d(y)(\alpha,\beta)=y_{\alpha+\beta}, \quad \alpha,\beta\in\mathbb{N}_d^n.
\end{equation*}
For $y=(y_\alpha)_{\alpha\in\mathbb N_{2d}^n}$, define the linear functional
\begin{equation*}
L_y:\mathbb R[x]_{2d}\to\mathbb R, \quad L_y(p):=\sum_{\alpha\in\mathbb N_{2d}^n}p_\alpha y_\alpha,
\end{equation*}
where $p(x)=\sum_{\alpha\in\mathbb N_{2d}^n}p_\alpha x^\alpha.$
The gradient and the Hessian of a polynomial $f \in \mathbb{R}[x]$ at a point $\overline{x}$ are denoted by $\nabla f(\overline{x})$ and $\nabla^{2} f(\overline{x}),$ respectively.

Next, we recall the following result that shows how to confirm whether a polynomial can be expressed as an SOS via solving SDP problems.
\begin{proposition}{\rm \cite[Proposition 2.1]{Lasserre2015}} \label{prop2.1}
A polynomial $f \in \mathbb{R}[x]_{2d}$ has an SOS decomposition if and only if there exists $Q \in S_{+}^{s(n,d)}$ such that  $f(x)=\left\langle \lceil x \rceil_{d} \lceil x \rceil_{d}^{T} , Q\right\rangle$ for all $ x \in \mathbb{R}^{n}.$
\end{proposition}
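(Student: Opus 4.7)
The plan is to prove the equivalence by exhibiting an explicit correspondence between sum-of-squares decompositions of $f$ and positive semidefinite Gram matrices with respect to the monomial basis $\lceil x \rceil_d$. The key observation underlying both directions is that any polynomial $h \in \mathbb{R}[x]_d$ can be uniquely expressed as $h(x) = v_h^T \lceil x \rceil_d$ for some coefficient vector $v_h \in \mathbb{R}^{s(n,d)}$, because $\lceil x \rceil_d$ is a basis of $\mathbb{R}[x]_d$.

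For the \emph{forward} implication, I would start from an SOS decomposition $f(x) = \sum_{j=1}^{l} f_j(x)^2$ and first argue that each $f_j$ may be taken in $\mathbb{R}[x]_d$ (since $\deg f \leq 2d$ forces $\deg f_j \leq d$ after possibly discarding cancelling higher-order terms). Writing $f_j(x) = v_j^T \lceil x \rceil_d$, I would expand
\begin{equation*}
f_j(x)^2 \;=\; \bigl(v_j^T \lceil x \rceil_d\bigr)\bigl(\lceil x \rceil_d^T v_j\bigr) \;=\; \bigl\langle \lceil x \rceil_d \lceil x \rceil_d^T,\; v_j v_j^T \bigr\rangle,
\end{equation*}
and then set $Q := \sum_{j=1}^{l} v_j v_j^T$. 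This $Q$ is a sum of rank-one PSD matrices, hence $Q \in S_+^{s(n,d)}$, and summing the identity above over $j$ yields the desired representation of $f$.

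For the \emph{converse}, given $Q \in S_+^{s(n,d)}$ with $f(x) = \langle \lceil x \rceil_d \lceil x \rceil_d^T, Q\rangle$, I would apply a spectral (or Cholesky-type) decomposition to write $Q = \sum_{j=1}^{l} v_j v_j^T$ with $v_j \in \mathbb{R}^{s(n,d)}$, which is possible precisely because $Q$ is PSD. Substituting back and using the same Gram identity in reverse gives
\begin{equation*}
f(x) \;=\; \sum_{j=1}^{l} \bigl\langle \lceil x \rceil_d \lceil x \rceil_d^T,\; v_j v_j^T\bigr\rangle \;=\; \sum_{j=1}^{l} \bigl(v_j^T \lceil x \rceil_d\bigr)^2,
\end{equation*}
exhibiting $f$ as a sum of squares of the polynomials $f_j(x) := v_j^T \lceil x \rceil_d \in \mathbb{R}[x]_d$.

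There is no serious obstacle here; the statement is essentially a reformulation of the Gram-matrix method. The only mildly delicate point is the justification, in the forward direction, that each SOS summand has degree at most $d$, and the use of the spectral decomposition of a PSD matrix in the converse direction. Both are standard, so the proof should be short and self-contained once the monomial-basis identity $f_j(x)^2 = \langle \lceil x \rceil_d \lceil x \rceil_d^T, v_j v_j^T\rangle$ is recorded.
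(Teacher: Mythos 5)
Your proof is correct. The paper states this proposition as a known result cited from Lasserre's book and gives no proof of its own, so there is nothing to compare against; your argument is the standard Gram-matrix equivalence, and the identity $f_j(x)^2=\bigl\langle \lceil x\rceil_d\lceil x\rceil_d^T, v_jv_j^T\bigr\rangle$ together with the rank-one decomposition of a PSD matrix is exactly the right mechanism. One small point worth tightening: in the forward direction you say each $f_j$ may be taken in $\mathbb{R}[x]_d$ ``after possibly discarding cancelling higher-order terms,'' but one cannot simply discard terms of $f_j$ and preserve the identity $f=\sum_j f_j^2$. The correct justification is that no such higher-order terms can occur: if $D:=\max_j \deg f_j>d$, then the degree-$2D$ homogeneous part of $\sum_j f_j^2$ equals the sum of the squares of the leading forms of those $f_j$ with $\deg f_j=D$, and a sum of squares of nonzero real forms cannot vanish identically, which would force $\deg f=2D>2d$, a contradiction. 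With that justification in place the proof is complete and self-contained.
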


Note that $f(x)=\sum_{\alpha \in \mathbb{N}_{2 d}^{n}} f_{\alpha} x^{\alpha}$ is an SOS polynomial if and only if we can solve the following semidefinite feasibility problem:
\begin{equation*}
\text{Find } Q \in S_{+}^{s(n,d)}  \text{ such that }  \left\langle\mathcal{B}_{\alpha}, Q\right\rangle=f_{\alpha}, \forall \alpha \in \mathbb{N}_{2d}^{n}.
\end{equation*}

At this moment, we would like to mention that not every nonnegative polynomial is an SOS polynomial; see, e.g., \cite{Reznick2000}.
Nevertheless, Lasserre and Netzer~\cite{Lasserre2007} developed SOS approximations of nonnegative polynomials based on high-degree perturbations; see also \cite{Guo2020}.
\begin{lemma}{\rm \cite[Theorems 3.1, 3.2 and Corollary   3.3]{Lasserre2007}}\label{lemma2.1}
For a given $h \in \mathbb{R}[x],$ the following statements are true.
\begin{enumerate}[\upshape (i)]
\item For any $r \geq\lceil\deg h/2\rceil,$ there exists $\varepsilon_{r}^{*} \geq 0$ such that  $h+\varepsilon\left(1+\sum_{j=1}^{n} x_{j}^{2 r}\right) $ is SOS if and only if $\varepsilon \geq \varepsilon_{r}^{*}.$
\item If $h$ is nonnegative on $[-1,1]^{n},$ then $\varepsilon_r^*$ in {\rm (i)} converges monotonically to $0$ as $r\to\infty$.
\item For any $\varepsilon>0,$ if $h$ is nonnegative on $[-1,1]^{n},$ then there exists some $r(h, \varepsilon) \in \mathbb{N}$ such that  $h+\varepsilon\left(1+\sum_{j=1}^{n} x_{j}^{2 r}\right)$ is SOS for every $r \geq r(h, \varepsilon).$
\end{enumerate}
\end{lemma}

In what follows, we recall an interesting subclass of convex polynomials introduced by Helton and Nie~\cite{Helton2010}.
\begin{definition}
{\rm
A polynomial $f\in\mathbb{R}[x]$ is called \emph{SOS-convex} if there exist an integer $p\geq 1$ and a polynomial matrix $F\in\mathbb{R}[x]^{n\times p}$ such that
\begin{equation*}
\nabla^2f(x)=F(x)F(x)^T.
\end{equation*}
}\end{definition}
The significance of SOS-convexity is that checking whether a polynomial is SOS-convex can be formulated as an SDP feasibility problem, whereas deciding whether a polynomial is convex is, in general,  NP-hard \cite{Ahmadi2013,Helton2010}.
Clearly, an SOS-convex polynomial is convex, while the converse is not true, that is, there exists a convex polynomial that is not SOS-convex \cite{Ahmadi2013}.
Besides, SOS-convex polynomials enjoy many important properties, among which, we list here the following one that is needed in our paper.

\begin{lemma}{\rm \cite{Lasserre2015}}\label{lemma2.3}
Let $ f \in \mathbb{R}[x]_{2d}$ be SOS-convex$,$ and let $y=\left(y_{\alpha}\right)_{\alpha \in \mathbb{N}_{2d}^{n}}$ satisfy  $y_{0}=1$ and $\mathbf{M}_{d}(y) \succeq 0.$
Then
\begin{equation*}
L_{y}(f) \geq f\left(L_{y}(x)\right),
\end{equation*}
where $L_{y}(x):=\left(L_{y}\left(x_{1}\right), \ldots,L_{y}\left(x_{n}\right)\right).$
\end{lemma}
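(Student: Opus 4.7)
The plan is to prove this Jensen-type inequality by linearizing $f$ around the point $\bar{x} := L_y(x)$ and invoking Lemma~\ref{lemma2.2} to express the remainder as a sum of squares, to which the positivity of $\mathbf{M}_d(y)$ can then be applied. This is the standard route for moment-based SOS-convex inequalities, and the three ingredients (SOS-convexity, zero value and gradient at $\bar{x}$, and positive semidefiniteness of the moment matrix) dovetail naturally.

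First I would define the first-order remainder
\begin{equation*}
h(x) := f(x) - f(\bar{x}) - \nabla f(\bar{x})^{T}(x - \bar{x}),
\end{equation*}
which is a polynomial of degree at most $2d$. Since $\nabla^{2} h = \nabla^{2} f$, the polynomial $h$ is SOS-convex, and by construction $h(\bar{x}) = 0$ and $\nabla h(\bar{x}) = 0$. Lemma~\ref{lemma2.2} therefore supplies an SOS decomposition $h(x) = \sum_{j=1}^{l} h_j(x)^{2}$, where the degree bound on $h$ forces each $h_j \in \mathbb{R}[x]_d$. Writing $h_j(x) = v_j^{T} \lceil x \rceil_d$ for some $v_j \in \mathbb{R}^{s(n,d)}$ and exploiting the identity $\mathbf{M}_d(y) = L_y\bigl(\lceil x \rceil_d \lceil x \rceil_d^{T}\bigr)$, linearity of $L_y$ gives
\begin{equation*}
L_y(h_j^{2}) = v_j^{T}\, \mathbf{M}_d(y)\, v_j \geq 0,
\end{equation*}
so that $L_y(h) \geq 0$ whenever $\mathbf{M}_d(y) \succeq 0$.

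Finally I would expand $L_y(h)$ term by term, using $y_0 = 1$ to handle the constant summand $f(\bar{x})$ and using the definition $\bar{x} = L_y(x)$ to kill the linear summand:
\begin{equation*}
L_y\bigl(\nabla f(\bar{x})^{T}(x - \bar{x})\bigr) = \nabla f(\bar{x})^{T}\bigl(L_y(x) - \bar{x}\bigr) = 0.
\end{equation*}
Combining these identities yields $L_y(h) = L_y(f) - f(L_y(x)) \geq 0$, which is the claimed inequality. The only delicate point worth checking carefully is that the SOS summands produced by Lemma~\ref{lemma2.2} actually sit inside $\mathbb{R}[x]_d$, because that is precisely what allows the moment matrix $\mathbf{M}_d(y)$ (rather than a higher-order variant) to witness nonnegativity of $L_y$ on squares; I do not anticipate any further obstacle.
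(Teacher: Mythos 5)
Your proof is correct. The paper states this lemma without proof, citing \cite{Lasserre2015}, and your argument --- linearize $f$ at $\bar{x}=L_y(x)$, apply Lemma~\ref{lemma2.2} to the remainder $h$ to get an SOS decomposition with summands in $\mathbb{R}[x]_d$, and pair each square against $\mathbf{M}_d(y)\succeq 0$ --- is exactly the standard proof of this extended Jensen inequality in that reference, including the correct observation that the degree bound on the SOS summands is what lets $\mathbf{M}_d(y)$ itself certify $L_y(h)\geq 0$.
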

We finish this section by the following lemma, which is a fundamental result that guarantees the existence of a minimizer for convex polynomial problems.

\begin{lemma}{\rm \cite{Belousov2002}}\label{lemma2.4}
Let $f, g_{1}, \ldots, g_{m}$ be convex polynomials on $\mathbb{R}^{n}.$ Let
\begin{equation*}
C:= \left\{x \in \mathbb{R}^{n} \mid g_{i}(x) \leq 0,\ i = 1, \ldots, m\right\} \neq \emptyset.
\end{equation*}
Suppose that $\inf_{x \in C} f(x) > -\infty.$
Then $\operatorname{argmin}_{x \in C} f(x) \neq \emptyset.$
\end{lemma}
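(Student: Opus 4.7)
The plan is a Weierstrass-type argument enhanced by the asymptotic rigidity of convex polynomials. Denote $f^{\ast} := \inf_{x \in K} f(x) \in \mathbb{R}$ and pick a minimizing sequence $\{x_k\} \subset K$ with $f(x_k) \to f^{\ast}$. Since each $g_i$ is continuous and convex, $K$ is closed and convex, and $f$ is continuous. If some subsequence of $\{x_k\}$ is bounded, Bolzano--Weierstrass supplies $x_{k_j} \to \bar{x}$, and closedness of $K$ together with continuity of $f$ give $\bar{x} \in K$ and $f(\bar{x}) = f^{\ast}$, finishing the proof.

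The interesting case is $\|x_k\| \to \infty$. After passing to a subsequence I may assume $d_k := x_k / \|x_k\| \to d$ for some unit vector $d$. A standard asymptotic-cone argument, using closedness and convexity of $K$, shows that $d$ lies in the recession cone $0^{+}K$, so $g_i^{\infty}(d) \le 0$ for every $i$. From $f(x_k)$ being bounded while $\|x_k\| \to \infty$, convexity of $f$ yields $f^{\infty}(d) \le 0$; and since $f$ is bounded below, $f^{\infty}(d) \ge 0$. Therefore $f^{\infty}(d) = 0$.

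The decisive new ingredient is the following polynomial rigidity. For any $x_0 \in \mathbb{R}^n$, the univariate map $q(t) := f(x_0 + td)$ is a convex polynomial in $t$ satisfying $q(t)/t \to 0$. A non-constant convex univariate polynomial, however, is either affine with nonzero slope or has degree at least two with positive leading coefficient, and in either case $q(t)/t$ possesses a nonzero (possibly infinite) limit, a contradiction. Hence $q$ is constant, so $f$ is invariant under translation by any multiple of $d$. The same reasoning applied to each $g_i$ with $g_i^{\infty}(d) = 0$ shows that such $g_i$ is constant along $d$; for the remaining indices $g_i^{\infty}(d) < 0$, so $g_i$ strictly decreases along the ray $x_0 + td$.

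It remains to produce a bounded minimizing sequence. Using the invariance of $f$ along $d$ together with the monotonicity of the $g_i$ along $d$, I slide each $x_k$ against the direction $d$ as far as the constraints permit; this does not change $f(x_k)$, cannot violate any $g_i \le 0$, and reduces $\|x_k\|$. Iterating the construction over a basis of the lineality space $L$ of $f$ — equivalently, passing to the coercive quotient polynomial $\tilde{f}$ on $L^{\perp}$ — eventually produces a bounded minimizing sequence, and the first paragraph concludes. I expect the \textbf{main obstacle} to be precisely this last step: carefully verifying that the sliding/quotient procedure simultaneously respects every constraint $g_i \le 0$ and actually yields a bounded subsequence. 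Both technical ingredients, however, ultimately reduce to the one-variable polynomial rigidity above — the fact that a convex univariate polynomial with sublinear asymptotic growth is constant — which is the clean algebraic core of the proof.
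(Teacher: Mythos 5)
The paper offers no proof of this lemma; it is imported verbatim from Belousov and Klatte \cite{Belousov2002}, so your attempt can only be measured against that source. Your central algebraic observation is correct and really is the heart of the matter: if $q(t):=f(x_0+td)$ is a convex univariate polynomial with $q(t)/t\to 0$ as $t\to+\infty$, then $q$ is constant (a convex polynomial of degree $\ge 2$ has $q''\ge 0$, $q''\not\equiv 0$, hence positive leading coefficient and $q(t)/t\to+\infty$; an affine $q$ with nonzero slope has a nonzero limit). This is precisely the rigidity that fails for general convex functions such as $e^{-x}$ and that makes the polynomial Frank--Wolfe theorem true. Your derivation of $d\in 0^{+}K$, of $g_i^{\infty}(d)\le 0$ for all $i$, and of $f^{\infty}(d)=0$ from an unbounded minimizing sequence is likewise standard and sound.

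The genuine gap is the one you flag yourself: passing from ``$f$ is constant along $d$'' to a bounded minimizing sequence. The sliding procedure has no termination guarantee (each slid sequence may escape along a fresh direction, and the admissible directions form a convex cone, not a subspace), and the fallback of quotienting by the lineality space $L$ of $f$ is not available, because the constraints need not be invariant under translations in $L$: for $f(x,y)=y$ and $g_1(x,y)=x^2-y$, $L$ is the $x$-axis but $g_1$ does not descend to $L^{\perp}$. The standard way to close the argument is a double induction on $(n,m)$ driven by the dichotomy on $I_-:=\{\,i : g_i^{\infty}(d)<0\,\}$. If $I_-\neq\emptyset$, your rigidity shows each such $g_i$ is affine and strictly decreasing along $d$, so every point feasible for the reduced problem with constraint index set $I_0:=\{\,i : g_i^{\infty}(d)=0\,\}$ becomes feasible for the full problem after translating by $+td$ for large $t$ (note: along $+d$, not against it), with the same $f$-value; hence the constraints in $I_-$ may be deleted without changing the infimum, a minimizer of the reduced problem translates to one of the original, and $m$ drops. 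If $I_-=\emptyset$, then $f$ and every $g_i$ are constant along $d$, the entire problem descends to $\mathbb{R}^n/\mathbb{R}d$, and $n$ drops. With the trivial base cases this completes the proof; without some such induction your sketch correctly isolates the key lemma but does not yet establish attainment.
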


\section{The Charnes--Cooper Transformation}\label{sect::3}

The Charnes--Cooper transformation (CCT) is a classical technique for reformulating fractional programs \cite{MR152370,Chen2005JOTA,schaible1974}.
As discussed in Section~\ref{sect:1}, unlike Dinkelbach-type methods, which solve a sequence of parameterized subproblems, the CCT produces a single reformulation in the lifted variables $(s,t)$ without requiring an iteratively updated ratio parameter.
In this sense, the CCT reformulation is parameter-free.
Moreover, under suitable convexity assumptions, the resulting perspective reformulation is a convex optimization problem, at the cost of introducing an additional variable and constraints arising from the transformation.

To proceed, let $K\subseteq\mathbb{R}^n$ be a nonempty set, and let $f,g:\mathbb{R}^n\to\mathbb{R}$ be functions such that $g>0$ on $K$.
Consider the following fractional program:
\begin{equation}\label{FP}
\inf_{x \in K}\ \frac{f(x)}{g(x)}. \tag{FP}
\end{equation}
The Charnes--Cooper variable transformation \cite{schaible1974}
\begin{framed}
\begin{equation}\label{CCT}
s:= \frac{x}{g(x)}, \quad t:= \frac{1}{g(x)} \tag{CCT}
\end{equation}
\end{framed}
\noindent
tells us that the problem \eqref{FP} has a very close relationship with the following problem:
\begin{equation}\label{P0}
\inf_{(s,t)\in K_{=}}  \widetilde{F} \left( s,t \right), \tag{${\rm P}_0$}
\end{equation}
where
\begin{equation*}
\widetilde{F} \left(s,t \right):= t\, f\left(s/t\right) \quad \textrm{and} \quad K_{=}:= \left\{(s,t) \in \mathbb{R}^{n} \times \mathbb{R} \mid t > 0, \ \tfrac{s}{t} \in K,\  t\,g \left(s/t\right) = 1 \right\}.
\end{equation*}

At this point, we would like to mention that, by the transformation \eqref{CCT}, there exists a one-to-one correspondence between $x\in K$ and $(s,t)\in K_{=}.$
As demonstrated in the following lemma,  the optimal solutions to the problems \eqref{FP} and \eqref{P0} correspond to each other, and have the same optimal value (cf. \cite[Lemma 2.1, Theorem 2.1 \& Remark 2.1]{Chen2005JOTA}).

\begin{lemma}\label{lemma3.1}
For the problems~\eqref{FP} and \eqref{P0}$,$ the following statements hold.
\begin{enumerate}[\upshape (i)]
\item If $x^{*}$ is an optimal solution to the problem~\eqref{FP}$,$ then the point $(s^*, t^*)$ under the transformation \eqref{CCT} is an optimal solution to the problem \eqref{P0}.
\item Conversely$,$ if $(s^*, t^*)$ is an optimal solution to the problem~\eqref{P0}$,$ then ${s^*}/{t^*}$ is an optimal solution to the problem \eqref{FP}.
\item The optimal values of the problems~\eqref{FP} and~\eqref{P0} are equal.
\end{enumerate}
\end{lemma}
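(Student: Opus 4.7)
The plan is to prove all three statements simultaneously by establishing that the Charnes--Cooper transformation \eqref{C.C.T.} provides an objective-preserving bijection between the feasible sets $K$ and $\widetilde{K}_=$. Once this bijection is in place, parts (i), (ii), and (iii) all follow immediately, since a bijection that respects objective values trivially identifies optimal solutions and optimal values.

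First I would define $\Phi\colon K \to \mathbb{R}^n \times \mathbb{R}$ by
\begin{equation*}
\Phi(x) := \left( \tfrac{x}{-g(x)},\ \tfrac{1}{-g(x)} \right)
\end{equation*}
and verify it actually lands in $\widetilde{K}_=$. The assumption $-g(x) > 0$ on $K$ gives $t := 1/(-g(x)) > 0$; by construction $s/t = x \in K$; and $-t\, g(s/t) = -g(x)/(-g(x)) = 1$, so $\Phi(x) \in \widetilde{K}_=$. Next I would define $\Psi\colon \widetilde{K}_= \to K$ by $\Psi(s,t) := s/t$, which is well-defined because $t > 0$ and $s/t \in K$ are part of the definition of $\widetilde{K}_=$. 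A short calculation shows $\Psi \circ \Phi = \mathrm{id}_K$, and, using the defining constraint $-t\, g(s/t) = 1$ to recover $-g(s/t) = 1/t$, one verifies $\Phi \circ \Psi = \mathrm{id}_{\widetilde{K}_=}$. Thus $\Phi$ is a bijection with inverse $\Psi$.

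Next I would compare objective values along this bijection. For $x \in K$ and $(s,t) = \Phi(x)$, the constraint $-t\, g(s/t) = 1$ yields
\begin{equation*}
\widetilde{F}(s,t) = t\, f(s/t) = \frac{1}{-g(x)}\, f(x) = \frac{f(x)}{-g(x)},
\end{equation*}
so the objective at $x$ in \eqref{(P)} equals the objective at $\Phi(x)$ in \eqref{(P_1)}. Taking infima over the two feasible sets and using bijectivity immediately establishes (iii), and also shows that the feasibility and objective values are preserved in both directions, giving (i) and (ii).

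I do not foresee a real obstacle here: the result is essentially bookkeeping around the identity $-t\, g(s/t) = 1$, which is precisely what makes the transformation work. The only minor care needed is to explicitly invoke the hypothesis $-g > 0$ on $K$ to guarantee that $\Phi$ is well-defined (division by $-g(x)$ and positivity of $t$), and to use the equality constraint, rather than an inequality, when inverting the transformation so that $\Phi \circ \Psi = \mathrm{id}_{\widetilde{K}_=}$ holds on the nose.
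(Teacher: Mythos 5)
Your proof is correct, and it is the standard argument underlying this lemma, which the paper itself does not prove but instead defers to the cited reference (Chen--Lai--Schaible, J.\ Optim.\ Theory Appl.\ 2005): an objective-preserving bijection between $K$ and $\widetilde{K}_{=}$ via \eqref{C.C.T.}, from which (i)--(iii) follow at once. The one point you rightly flag --- using the equality constraint $-t\,g(s/t)=1$ to recover $-g(s/t)=1/t$ when verifying $\Phi\circ\Psi=\mathrm{id}_{\widetilde{K}_{=}}$ --- is exactly where the argument would break for the relaxed set $\widetilde{K}$, so your treatment is complete as written.
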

In general, when $g$ is nonlinear, the feasible set $K_{=}$ of problem~\eqref{P0} may be nonconvex, even if $K$ is convex.
Thus, in addition to the problem~\eqref{P0}, we consider the following related problem~\eqref{Ptilde}:
\begin{equation}\label{Ptilde}
\inf_{(s,t)\in K_{\leq}} \widetilde{F} \left(s,t \right), \tag{$\widetilde{\rm P}$}
\end{equation}
where
\begin{equation*}
K_{\leq}:=\left\{(s,t) \in \mathbb{R}^{n} \times \mathbb{R} \mid t>0,\ \tfrac{s}{t} \in K, \  1-t\,g \left(s/t\right) \leq 0\right\}.
\end{equation*}

\begin{remark}[CCT and perspective functions]
Recall that the perspective of a function $q:\operatorname{dom}q\to\mathbb R$ is the function
\begin{equation*}
\widetilde q(z,t):=tq(z/t),
\end{equation*}
defined on
\begin{equation*}
\operatorname{dom}\widetilde q:=\left\{(z,t)\in\mathbb R^n\times(0,+\infty)\mid z/t\in\operatorname{dom}q\right\}.
\end{equation*}
The objective function
\begin{equation*}
\widetilde F(s,t)=tf(s/t)
\end{equation*}
of problems~\eqref{P0} and~\eqref{Ptilde} is precisely the perspective of $f$.
\end{remark}

\begin{lemma}\label{lemma3.2}
Assume that $f \geq 0$ on $K.$
Then
\begin{equation*}
\inf\eqref{P0} = \inf\eqref{Ptilde}.
\end{equation*}
Moreover$,$ every optimal solution to the problem~\eqref{P0} is also an optimal solution to the problem~\eqref{Ptilde}.
\end{lemma}

\begin{proof}
Note that $\inf\eqref{P0}\geq \inf\eqref{Ptilde}$ due to the fact that $K_{=}\subseteq K_{\leq},$ and observe that
\begin{equation*}
K_{\leq}=\left\{(s,t) \in \mathbb{R}^{n} \times \mathbb{R} \mid \exists\, (\bar s,\bar t)\in K_{=}, \ \tau\geq 1 \textrm{ such that } (s,t)=\tau(\bar s,\bar t)\right\}.
\end{equation*}
Indeed, for any $(s,t)\in\mathcal K_{\leq}$, let
\begin{equation*}
x:=\frac{s}{t},\quad \bar t:=\frac{1}{g(x)},\quad \bar s:=\bar t x,\quad \tau:=t g(x).
\end{equation*}
Then $\tau\geq1$, $(\bar s,\bar t)\in\mathcal K_=$, and $(s,t)=\tau(\bar s,\bar t)$.
The converse implication is immediate.

If $(s,t)=\tau(\bar s,\bar t)$ with $\tau\geq 1$ and $(\bar s,\bar t)\in K_{=}$, then
\begin{equation}\label{lemma3.2rel1}
\widetilde{F}(s,t)=t\, f\left({s}/{t}\right)=\tau\bar t\, f\left({\bar s}/{\bar t}\right)=\tau \widetilde{F}(\bar s,\bar t) \geq \widetilde{F}(\bar s,\bar t),
\end{equation}
where the last inequality follows from $\widetilde{F}(\bar s,\bar t)\geq 0$ (since $f\geq 0$ on $K$).
Therefore, we have $\inf\eqref{P0}\leq \inf\eqref{Ptilde},$ and hence $\inf\eqref{P0}= \inf\eqref{Ptilde}.$

Now, let $(s^*,t^*)\in K_=$ be an optimal solution to the problem~\eqref{P0}.
Suppose to the contrary that $(s^*,t^*)$ is not optimal for \eqref{Ptilde}.
Then there exists $(s,t)\in K_{\leq}$ such that
\begin{equation*}
\widetilde{F}(s,t)<\widetilde{F}(s^*,t^*).
\end{equation*}
By the representation of $K_{\leq}$ given above, there exist $(\bar s,\bar t)\in K_{=}$ and $\tau\geq 1$ such that
\begin{equation*}
(s,t)=\tau(\bar s,\bar t).
\end{equation*}
It follows from~\eqref{lemma3.2rel1} that
\begin{equation*}
\widetilde{F}(\bar s,\bar t)\leq\widetilde{F}(s,t)<\widetilde{F}(s^*,t^*),
\end{equation*}
which contradicts the optimality of $(s^*,t^*)$.
Hence, $(s^*,t^*)$ is also an optimal solution to the problem~\eqref{Ptilde}.
\end{proof}

\begin{remark}{\rm
The conclusion of Lemma~\ref{lemma3.2} cannot be strengthened to the result that \eqref{P0} and \eqref{Ptilde} admit the same optimal solutions under the assumption $f\geq 0$ on $K$.
Indeed$,$ consider $f(x)=x^2,$ $g(x)\equiv 1,$ and $K=[-1,1].$
Then \eqref{P0} has the unique optimal solution $(s,t)=(0,1)$, whereas every point $(0,t)$ with $t\geq 1$ is an optimal solution to the problem~\eqref{Ptilde}.
Thus$,$ \eqref{P0} and \eqref{Ptilde} have the same optimal value$,$ but their optimal solution sets are different.

On the other hand, if $f>0$ on $K$, then the optimal solution sets of \eqref{P0} and \eqref{Ptilde} coincide.
Indeed$,$ suppose that $(s,t) \in K_{\leq}\setminus K_{=}$ is an optimal solution to the problem~\eqref{Ptilde}.
Then there exist $(\bar s,\bar t)\in K_{=}$ and $\tau>1$ such that $(s,t)=\tau(\bar s,\bar t)$, which yields $\widetilde{F}(s,t)=\tau\widetilde{F}(\bar s,\bar t)>\widetilde{F}(\bar s,\bar t)$, contradicting the optimality of $(s,t)$.
Therefore, every optimal solution to problem~\eqref{Ptilde} belongs to $\mathcal K_=$.
Since $\inf\eqref{P0}=\inf\eqref{Ptilde}$, every such point is also an optimal solution to problem~\eqref{P0}.
Together with Lemma~\ref{lemma3.2}, this shows that the two problems have the same optimal solution set.
}\end{remark}

The following proposition shows that any optimal solution to the problem~\eqref{Ptilde} can be used to recover an optimal solution to the problem~\eqref{FP}.
\begin{proposition}\label{prop3.1}
Assume that $f\geq 0$ on $K$, and let $(s^*,t^*)$ be an optimal solution to problem~\eqref{Ptilde}.
Then
\begin{equation*}
x^*:=\frac{s^*}{t^*}
\end{equation*}
is an optimal solution to problem~\eqref{FP}.

Moreover, define \begin{equation*}
\bar t:=\frac{1}{g(x^*)}, \quad \bar s:=\bar t x^*.
\end{equation*}
Then $(\bar s,\bar t)\in K_{=}$ is an optimal solution to problem~\eqref{P0}.
\end{proposition}

\begin{proof}
Let
\begin{equation*}
x^*:=\frac{s^*}{t^*}, \quad \bar t:=\frac{1}{g(x^*)}, \quad \bar s:=\bar t x^*.
\end{equation*}
Since $(s^*,t^*)\in K_{\leq}$, we have $x^*\in K$ and
\begin{equation*}
1-t^*g(x^*)\leq0.
\end{equation*}
Since $g(x^*)>0$, it follows that
\begin{equation*}
t^*\geq\frac{1}{g(x^*)}=\bar t.
\end{equation*}
Hence, using $f(x^*)\geq 0,$
\begin{equation*}
\widetilde F(s^*,t^*)=t^*f(x^*)\geq\frac{f(x^*)}{g(x^*)}.
\end{equation*}
On the other hand, by Lemma~\ref{lemma3.1} and Lemma~\ref{lemma3.2}, we see that
\begin{equation*}
\widetilde F(s^*,t^*)=\inf\eqref{Ptilde}=\inf\eqref{P0}=\inf\eqref{FP}.
\end{equation*}
Thus, we have
\begin{equation*}
\inf\eqref{FP}=\widetilde F(s^*,t^*)\geq\frac{f(x^*)}{g(x^*)}.
\end{equation*}
Since $x^*\in K,$ we also have
\begin{equation*}
\frac{f(x^*)}{g(x^*)}\geq \inf\eqref{FP}.
\end{equation*}
Combining the above inequalities yields
\begin{equation*}
\frac{f(x^*)}{g(x^*)}=\inf\eqref{FP},
\end{equation*}
and thus, $x^*$ is an optimal solution to problem~\eqref{FP}.

Now define
\begin{equation*}
\overline t:=\frac{1}{g(x^*)}, \quad \overline s:=\overline t\,x^*.
\end{equation*}
Then $(\overline s,\overline t)\in K_=,$ and
\begin{equation*}
\widetilde F(\overline s,\overline t)=\overline t\,f(x^*)=\frac{f(x^*)}{g(x^*)}=\inf\eqref{FP}=\inf\eqref{P0}.
\end{equation*}
Hence, $(\overline s,\overline t)$ is an optimal solution to the problem~\eqref{P0}.
\end{proof}

The following lemma shows that the problem~\eqref{Ptilde} is a convex optimization problem under certain conditions.
\begin{lemma}\label{lemma3.3}
If $K$ is convex and $f$ and $-g$ are convex functions$,$ then problem~\eqref{Ptilde} is a convex optimization problem.
\end{lemma}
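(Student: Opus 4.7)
The plan is to show that both the objective $\widetilde{F}(s,t) = t f(s/t)$ and the feasible set $\widetilde{K}$ are convex on the half-space $t>0$, so that \eqref{tilde{P}} qualifies as a convex program. The central tool is the perspective construction highlighted in the preceding remark: if $q:\mathbb{R}^n \to \mathbb{R}$ is convex, then its perspective $(s,t)\mapsto t\,q(s/t)$ is convex on $\{(s,t)\mid t>0\}$. I would invoke this classical fact (for instance, \cite[Section~3.2.6]{Boyd2004}) for both $f$ and $g$, which immediately yields convexity of $\widetilde{F}$ and of the map $(s,t)\mapsto t\,g(s/t)+1$.

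Next I would decompose the feasible set into three pieces and verify convexity of each. The constraint $t>0$ cuts out an open half-space, which is trivially convex. The constraint $t\,g(s/t)+1 \leq 0$ is a sublevel set of a convex function on $\{t>0\}$, hence convex by the perspective argument above. The remaining piece, the set $\{(s,t)\mid t>0,\ s/t\in K\}$, requires a short direct check: for two points $(s_1,t_1),(s_2,t_2)$ in this set and $\lambda\in[0,1]$, letting $(s,t)=\lambda(s_1,t_1)+(1-\lambda)(s_2,t_2)$, one writes
\begin{equation*}
\frac{s}{t} = \frac{\lambda t_1}{t}\cdot\frac{s_1}{t_1} + \frac{(1-\lambda)t_2}{t}\cdot\frac{s_2}{t_2},
\end{equation*}
which is a convex combination of two points in $K$, hence in $K$ by convexity of $K$. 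Intersecting the three convex sets gives that $\widetilde{K}$ is convex.

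I expect the only subtle point to be the careful handling of the domain when invoking the perspective construction, since $f$ and $g$ were introduced only on $K$ rather than on all of $\mathbb{R}^n$. The cleanest way to sidestep this is to note that the feasibility condition $s/t\in K$ is imposed separately, so the perspective of $f$ and $g$ need only be considered on $\{(s,t)\mid t>0,\ s/t\in K\}$, where they are well-defined and convex by the argument above. Once these pieces are assembled, convexity of both the objective and the feasible set follows, completing the proof.
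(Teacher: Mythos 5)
Your proposal is correct and follows essentially the same route as the paper: the same convex-combination identity $\tfrac{s}{t}=\gamma\tfrac{s_1}{t_1}+(1-\gamma)\tfrac{s_2}{t_2}$ with $\gamma=\lambda t_1/t$ for the membership constraint, and the same appeal to the fact that the perspective operation preserves convexity for the objective. The only cosmetic difference is that you also invoke the perspective fact for the constraint $t\,g(s/t)+1\le 0$ and intersect three convex sets, whereas the paper verifies that sublevel-set inequality directly by multiplying the convexity inequality for $g$ by $t$; your attention to the domain issue (that $f$ and $g$ need only be considered where $s/t\in K$) is a sensible precaution.
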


\begin{proof}
Let $(s_1,t_1),$ $(s_2,t_2)\in K_{\leq}$ and let $\theta\in[0,1].$
Set
\begin{equation*}
s:=\theta s_{1}+(1-\theta) s_{2}, \quad t:=\theta t_{1}+(1-\theta) t_{2}.
\end{equation*}
Since $t_{1}, t_{2}>0,$ we have $t > 0.$
Moreover, we see that
\begin{equation*}
\frac{s}{t}=\gamma \frac{s_{1}}{t_{1}}+(1-\gamma) \frac{s_{2}}{t_{2}},
\end{equation*}
where
\begin{equation*}
\gamma:=\frac{\theta t_1}{t}, \quad 1-\gamma=\frac{(1-\theta)t_2}{t}.
\end{equation*}
Since $t=\theta t_1+(1-\theta)t_2>0$, it follows that $\gamma\in[0,1]$.
By the convexity of $ K,$ we have $\frac{s}{t} \in K .$
Using the concavity of $g,$ we obtain
\begin{equation*}
g \left(s/t\right) \geq \gamma g\left(s_{1}/t_{1}\right)+(1-\gamma) g \left(s_{2}/t_{2}\right).
\end{equation*}
Multiplying both sides by $t$ yields
\begin{equation*}
t\,g \left(s/t\right) \geq \theta t_{1} \cdot g \left(s_{1}/t_{1}\right)+(1-\theta) t_{2} \cdot g \left(s_{2}/t_{2}\right) \geq 1,
\end{equation*}
where the last inequality follows from $\left(s_{i}, t_{i}\right) \in K_{\leq}$ for $i=1,2.$
Therefore, $ (s,t) \in K_{\leq},$ which establishes the convexity of $K_{\leq}.$
By definition, we have
\begin{equation*}
\operatorname{dom}\widetilde{F}=\left \{(s,t) \mid s/t \in \operatorname{dom} f, \ t>0\right\},
\end{equation*}
and $\widetilde F(s,t)=t\,f(s/t)$ is indeed the perspective of $f$.
Since the perspective operation preserves convexity (see \cite[page 89]{Boyd2004}), we know that $\widetilde F$ is a convex function on its domain.
Therefore, \eqref{Ptilde} is a convex optimization problem.
\end{proof}

\section{Solving Fractional Programs with Convex Polynomials via the CCT Method}\label{sect::4}

We now consider problem~\eqref{FP} under the following polynomial structure.
Let
\begin{equation}\label{K}
K:=\left\{x\in\mathbb{R}^n\mid h_i(x)\leq0,\quad i=1,\ldots,m\right\},
\end{equation}
where $f$, $-g$, and $h_i$, $i=1,\ldots,m$, are convex polynomials of degree at most $2d$.
Accordingly, problem~\eqref{FP} takes the form
\begin{equation*}
\inf_{x\in\mathbb{R}^n}\left\{\frac{f(x)}{g(x)}\mid h_i(x)\leq0,\quad i=1,\ldots,m\right\}.
\end{equation*}

Throughout this section, we impose the following commonly used assumptions:
\begin{itemize}
\item[\textbf{(A1)}] The set $K$ in~\eqref{K} is nonempty.
\item[\textbf{(A2)}] $f(x)\geq0$ and $g(x)>0$ for every $x\in K$.
\end{itemize}

Under this representation of $K$, problem~\eqref{Ptilde} can be written explicitly as
\begin{align}\label{PCCT}
\inf_{(s,t)\in \mathbb{R}^{n}\times \mathbb{R}} \quad &t\,f (s/t) \tag{P$_{\rm CCT}$} \\
{\rm s.t.}\qquad \ & t\,h_{i} (s/t) \le 0, \ i=1,\ldots m, \nonumber\\
& 1- t\, g(s/t) \le 0, \nonumber\\
& t>0. \notag
\end{align}
Moreover, Lemma~\ref{lemma3.3} shows that the problem \eqref{PCCT} is a convex optimization problem since $K$ is convex and $f$ and $-g$ are convex.

The following theorem establishes existence of optimal solutions to the problem \eqref{FP} as well as its CCT reformulation problem \eqref{PCCT}.

\begin{theorem}\label{Thm4.1}
Let $f,-g,h_1,\ldots,h_m:\mathbb{R}^n\to\mathbb{R}$ be convex polynomials$,$ and let assumptions {\bf (A1)} and {\bf (A2)} hold and $\sup_{x \in K} \{g(x)\}< +\infty.$ Then the following statements hold$:$
\begin{enumerate}[\upshape (i)]
\item The problem~\eqref{FP} attains its optimal value at some $x^*\in K$.
\item The problem~\eqref{PCCT} attains its minimum at some $(s^*,t^*)$ with $t^*>0$ and $t^* g(s^*/t^*)-1 = 0,$ and the optimal values of the problems \eqref{FP} and \eqref{PCCT} coincide.
\end{enumerate}
\end{theorem}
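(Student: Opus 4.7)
The plan is to reduce the fractional problem to a convex polynomial minimization to which Lemma~\ref{lemma2.4} (Belousov--Klatte) can be applied. Set $r^* := \inf_{x\in K} f(x)/(-g(x))$; assumption (A2) gives $r^* \geq 0$, while (A1) together with finiteness of $f/(-g)$ at any feasible point yields $r^* < +\infty$. The key device is the auxiliary polynomial
\[
F(x) := f(x) + r^*\, g(x),
\]
which is convex because $r^* \geq 0$ and both $f$ and $g$ are convex polynomials, and whose sign on $K$ is tied directly to the definition of $r^*$.

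Next I would verify that $\inf_{x \in K} F(x) = 0$. The inequality $F \geq 0$ on $K$ is immediate from $f(x) \geq r^* (-g(x))$. For the matching upper bound, pick a minimizing sequence $\{x_n\} \subset K$ for \eqref{FP} and observe
\[
F(x_n) = \Bigl(\frac{f(x_n)}{-g(x_n)} - r^*\Bigr)\bigl(-g(x_n)\bigr) \leq \Bigl(\frac{f(x_n)}{-g(x_n)} - r^*\Bigr)\, M,
\]
where $M := \sup_{x \in K}(-g(x)) < +\infty$ by hypothesis. The right-hand side tends to $0$, so $\inf_{x \in K} F(x) = 0$. Lemma~\ref{lemma2.4} then yields some $x^* \in K$ with $F(x^*) = 0$, and since $-g(x^*) > 0$ this rearranges to $f(x^*)/(-g(x^*)) = r^*$, proving (i).

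For (ii), I would apply the Charnes--Cooper transformation \eqref{C.C.T.}: set $t^* := 1/(-g(x^*))$ and $s^* := t^* x^*$. A direct check gives $t^* > 0$, $s^*/t^* = x^* \in K$, $t^* h_i(s^*/t^*) \leq 0$ for each $i$, and $t^* g(s^*/t^*) + 1 = 0$, so $(s^*, t^*) \in \widetilde{K}_{=}$ with the $g$-constraint saturated. The perspective objective evaluates to $t^* f(s^*/t^*) = f(x^*)/(-g(x^*)) = r^*$. Finally, Lemmas~\ref{lemma3.1} and \ref{lemma3.2} (the latter using $f \geq 0$ from (A2)) give equality of the optimal values of \eqref{FP} and \eqref{P-CCT} together with the optimality of $(s^*, t^*)$ for \eqref{P-CCT}.

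The main obstacle is identifying the right convex polynomial: once one recognizes that $F = f + r^* g$ is convex with infimum over $K$ equal to precisely $0$, Lemma~\ref{lemma2.4} does the heavy lifting and the passage to \eqref{P-CCT} is routine bookkeeping. A direct attack on \eqref{P-CCT} is less attractive, since its perspective objective $t f(s/t)$ is not a polynomial, so Lemma~\ref{lemma2.4} is not immediately applicable there; the convex-programming detour through $F$ neatly bypasses this difficulty, and the boundedness assumption $\sup_{x\in K}(-g(x)) < +\infty$ enters exactly at the step where the minimizing sequence is transferred from the fractional objective to $F$.
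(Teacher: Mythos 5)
Your proposal is correct and follows essentially the same route as the paper: both define the auxiliary convex polynomial $f + r^*g$, show its infimum over $K$ equals $0$ via a minimizing sequence and the bound $M = \sup_{x\in K}\{-g(x)\}$, invoke Lemma~\ref{lemma2.4} to obtain the minimizer $x^*$, and then pass to \eqref{P-CCT} through the Charnes--Cooper correspondence (the paper cites Lemma~\ref{lemma3.1} for part (ii), while you additionally spell out the feasibility check for $(s^*,t^*)$, which is a harmless elaboration). Your explicit remark that $r^*\geq 0$ is what guarantees convexity of $f+r^*g$ is a detail the paper leaves implicit but is correctly identified.
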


\begin{proof}
(i) Let $r^*:=\inf\eqref{FP},$ which is finite by assumption {\bf (A2)}.
Define $\Phi_{r}(x):=f(x)-r\,g(x)$.
Then, for every $x\in K$,
\begin{equation*}
\Phi_{r^*}(x)=f(x)-r^* g(x)=g(x)\left(\tfrac{f(x)}{g(x)}-r^*\right) \geq 0,
\end{equation*}
and hence $\inf_{x\in K}\Phi_{r^*}(x)\geq 0$.
Moreover, let $M:=\sup_{x\in K}\{g(x)\}<\infty$ and taking a sequence $x_k\in K$ with $\tfrac{f(x_k)}{g(x_k)}\to r^*$ as $k\to\infty$ gives
\begin{equation*}
0\leq \Phi_{r^*}(x_k)=g(x_k) \left(\frac{f(x_k)}{g(x_k)}-r^*\right)\leq M\left(\frac{f(x_k)}{g(x_k)}-r^*\right) \to 0
\end{equation*}
as $k\to\infty,$ and so, we have
\begin{equation*}
\inf_{x\in K}\Phi_{r^*}=0.
\end{equation*}
Since $\Phi_{r^*}$ is a convex polynomial and $K$ is a convex basic semialgebraic set defined by convex polynomials, by Lemma~\ref{lemma2.4}, the optimal value of $\Phi_{r^*}$ over $K$ is attained.
Hence there exists $x^*\in K$ such that $\Phi_{r^*}(x^*)=0,$ and so, we have
\begin{equation*}
\frac{f(x^*)}{g(x^*)}=r^*.
\end{equation*}
Thus, \eqref{FP} attains its optimal value at $x^*$.
	
(ii) Let $x^*$ be the optimal solution obtained in part~{\rm (i)}, and let
\begin{equation*}
t^*:=\frac{1}{g(x^*)}, \quad s^*:=t^*x^*.
\end{equation*}
By Lemma~\ref{lemma3.1}, $(s^*,t^*)$ is an optimal solution to problem~\eqref{P0}, and problems~\eqref{FP} and~\eqref{P0} have the same optimal value.
Since $f\geq0$ on $K$, Lemma~\ref{lemma3.2} shows that $(s^*,t^*)$ is also an optimal solution to problem~\eqref{PCCT}.
Moreover,
\begin{equation*}
t^*g(s^*/t^*)=1.
\end{equation*}
\end{proof}

\begin{remark}{\rm
\begin{enumerate}[\upshape (i)]
\item The assumption $\sup _{x \in K}\{g(x)\}<+\infty$ in Theorem \ref{Thm4.1} is crucial for ensuring that the optimal values of the problems \eqref{FP} and \eqref{PCCT} are attained.
If this assumption is removed, the conclusion of Theorem \ref{Thm4.1} may fail, as demonstrated by Example~\ref{example-a}.
However, this assumption is not very restrictive; it simply requires $g$ to be bounded above on $K$, or equivalently, $-g$ to be bounded below on $K$.
In particular, since $g$ is continuous, this condition is automatically satisfied whenever $K$ is compact.
			
\item It is worth mentioning that$,$ even with this boundedness assumption, existence results may fail beyond the setting of Theorem~\ref{Thm4.1}.
For instance, attainment can fail when $f$ is convex but nonpolynomial or when $f$ is polynomial but nonconvex.
\begin{itemize}
\item {\bf convex but nonpolynomial case.}
Let $n=1,$ $K=\mathbb{R},$ $g(x)\equiv 1,$ and $f(x)=e^{-x}$ $($convex, but not a polynomial$).$
Then $g>0$ on $K$ and $\sup_{x\in K}\{g(x)\}=1<\infty$, but
\begin{equation*}
\inf_{x\in K}\frac{f(x)}{g(x)}=\inf_{x\in\mathbb{R}} e^{-x}=0,
\end{equation*}
the infimum is not attained.
\item {\bf polynomial but nonconvex case.}
Let $n=2$, $K=\mathbb{R}^2,$ $g(x_1,x_2)\equiv 1,$ and $f(x_1,x_2)=(x_1x_2-1)^2+x_1^2$ $($a polynomial, but not convex$).$
Then $g>0$ on $K$ and $\sup_{x\in K}\{g(x)\}=1<\infty$, but
\begin{equation*}
\inf_{(x_1,x_2)\in K}\frac{f(x_1,x_2)}{g(x_1,x_2)}=\inf_{(x_1,x_2)\in\mathbb{R}^2} \{(x_1x_2-1)^2+x_1^2\} = 0,
\end{equation*}
the infimum is not attained.
\end{itemize}
Consequently, attainment may fail when the convex polynomial assumptions of Theorem~\ref{Thm4.1} are relaxed, even though $\sup_{x\in K}g(x)<+\infty$.
\end{enumerate}
}\end{remark}

\begin{example}\label{example-a}{\rm
Let $n:=1,$ and consider the following simple fractional problem with one inequality constraint,
\begin{align}\label{exam0}
\inf_{x \in \mathbb{R}} \left\{ \frac{f(x)}{g(x)} \mid h_1(x) \leq 0 \right\}, \tag{{\rm FP$_0$}}
\end{align}
where $f(x) = 1,$ $g(x) = -x$ and $h_1(x) = x + 1.$
Observe that the feasible set is
\begin{equation*}
K = \{ x \in \mathbb{R} \mid h_1(x) \leq 0 \} = (-\infty,-1].
\end{equation*}
Clearly, assumptions {\bf (A1)} and {\bf (A2)} are satisfied.
However,
\begin{equation*}
\sup_{x \in K} \{g(x)\} = \sup_{x \leq -1} \{-x\} = +\infty.
\end{equation*}
A simple calculation shows that the infimum of the problem~\eqref{exam0} is $0,$ but this value is never attained for any $x \in K.$
Therefore, the problem~\eqref{exam0} does not attain its optimal value.
		
Next, we consider the CCT reformulation \eqref{examd0} with its objective function $t f(s/t) = t \cdot 1 = t$,
\begin{align}\label{examd0}
\inf_{(s,t) \in \mathbb{R}^{n} \times \mathbb{R}} \left\{ t \mid t h_1(s/t) \leq 0,\ 1-t\,g(s/t) \leq 0,\ t > 0 \right\}, \tag{${\rm P_{\text{CCT0}}}$}
\end{align}
Thus, the feasible set is $\{ (s,t) \mid s+t\leq 0,\ s \leq -1,\ t > 0 \}$.
Clearly, the infimum is $0$, but no feasible $(s,t)$ achieves $t=0$.
Therefore, the problem~\eqref{examd0} does not attain its minimum.
		
Overall, both problems \eqref{exam0} and \eqref{examd0} fail to attain their optimal values in this example, demonstrating that the boundedness assumption cannot be removed from Theorem~\ref{Thm4.1}.
}\end{example}

Now, we derive the Lagrange dual of the problem~\eqref{PCCT}.
To do this, we define the Lagrangian function of the problem~\eqref{PCCT} as
\begin{equation*}
\mathcal{L}(s,t;\lambda,\gamma):= t\,f\left(\frac{s}{t}\right)+\sum_{i=1}^m \lambda_i\left(t\,h_i\left(\frac{s}{t}\right)\right)+\gamma\left(1-t\,g\left(\frac{s}{t}\right)\right),
\end{equation*}
with multipliers $\lambda_i\geq 0$ and $\gamma\geq 0$ associated with the inequality constraints.
Substituting $x:=s/t$ and using $t>0$, we obtain
\begin{equation*}
\inf_{t>0,\,x\in\mathbb{R}^n}\ \mathcal{L}(s,t;\lambda,\gamma)
= \inf_{t>0,\,x\in\mathbb{R}^n} \left\{\, t\left(f(x)+\sum_{i=1}^m\lambda_i h_i(x)-\gamma g(x)\right) + \gamma \,\right\}.
\end{equation*}
This infimum is finite if and only if
\begin{equation*}
f(x)+\sum_{i=1}^m\lambda_i h_i(x)-\gamma g(x)\ \geq 0
\end{equation*}
for all $x\in\mathbb{R}^n,$ in which case the infimum equals $\gamma$, approached as $t\downarrow0$.
Hence the Lagrange dual problem reads
\begin{align}\label{LD}
\sup_{\lambda,\,\gamma} &\quad \gamma \tag{LD} \\
{\rm s.t.} & \quad  f(x)+\sum_{i=1}^{m}\lambda_{i}h_{i}(x)-\gamma g (x)\geq0, \ \forall x \in \mathbb{R}^n, \nonumber\\
& \quad \lambda_{i}\ge0,\ i=1,\ldots,m, \ \gamma \ge0. \notag
\end{align}
By the standard weak duality theorem (see, e.g., \cite{Boyd2004}), the dual objective provides a lower bound on the primal optimal value, i.e., $\sup\eqref{LD}\leq \inf\eqref{PCCT}$.
In what follows, we present a strong duality result.

%
%

%

\begin{theorem}\label{Thm4.2}
If the problem~\eqref{FP} satisfies the Slater condition$,$ i.e.$,$ there exists a point $\hat{x}$ such that $ h_{i}({\hat{x}})<0 $ for all $ i=1, \ldots, m,$  then
\begin{equation*}
	\inf \eqref{PCCT} = \max \eqref{LD}.
\end{equation*}
\end{theorem}
\begin{proof}
We first show that the problem~\eqref{PCCT} has a strictly feasible solution, say  $(\hat{s}, \hat{t}) \in \mathbb{R}^{n} \times (0,+\infty)$, i.e.,
\begin{align*}
& \hat{t} h_{i} (\hat{s}/\hat{t}) < 0,\, i=1, \ldots m, \\
& 1-\hat{t}\, g (\hat{s}/\hat{t})< 0 , \\
&\hat{t}>0, \frac{\hat{s}}{\hat{t}} \in \mathbb{R}^{n}.
\end{align*}
Since the Slater condition for the problem~\eqref{FP} holds, there exists $\hat{x}$ such that $ h_{i}({\hat{x}})<0 $ for all $ i=1, \ldots, m$.
For any $\varepsilon>0,$ let $\hat{t}=\frac{1+\varepsilon}{g(\hat{x})},\,\hat{s}=\hat{t}\hat{x},$ then together with the fact that $g(\hat{x})>0,$ we have
\begin{align*}
\hat{t} h_{i} (\hat{s}/\hat{t}) & =\frac{1+\varepsilon}{g(\hat{x})} h_{i}({\hat{x}})<0,\\
1-\hat{t}\, g (\hat{s}/\hat{t}) & =1- \frac{1+\varepsilon}{g(\hat{x})}g(\hat{x})<0.
\end{align*}
Thus, the point $(\hat{s},\hat{t})$ is a strictly feasible solution for the problem~\eqref{PCCT}, in other words, the problem~\eqref{PCCT} satisfies the Slater condition.

Next, let $p^{*}:=\inf \eqref{PCCT},$ and
consider the following sets
\begin{align*}
\mathcal{A} =\Bigl\{(u,v,p)\in \mathbb{R}^m \times \mathbb{R} \times \mathbb{R} \mid \, & \exists \, (s,t) \in \mathbb{R}^n \times (0,+\infty) \textrm{ s.t. } t\, h_{i}({s}/{t})\leq u_{i},\ i=1,\ldots,m,\\
& 1-t\,g({s}/{t})\leq v, \ t\, f({s}/{t})\leq p \Bigr\} ,\\
\mathcal{B} =\{(0,0, w) \in \mathbb{R}^m \times \mathbb{R} \times \mathbb{R} \mid \,& w<p^{*}\} .
\end{align*}
Observe that $\mathcal{A} \cap \mathcal{B}= \emptyset.$
Indeed, suppose $(u, v, p) \in \mathcal{A} \cap \mathcal{B}.$
Since $(u, v, p) \in \mathcal{B},$ we have $u=0,$ $v=0,$ and $p<p^{*},$ which is impossible since $p^{*}$ is the optimal value of the  problem \eqref{PCCT}.

Since $\mathcal{A}$ and $\mathcal{B}$ are convex and disjoint, by the hyperplane separation theorem (see Lemma~\ref{SHT}), there exist $0 \neq (\lambda, \gamma,\nu)\in \mathbb{R}^m \times \mathbb{R} \times \mathbb{R}$ and  $\alpha \in \mathbb{R}$ such that
\begin{eqnarray}
\lambda^{T} u+\gamma v+\nu p\geq \alpha , && \forall (u, v, p) \in \mathcal{A}, \label{4} \\
\lambda^{T} u+\gamma v+\nu p \leq \alpha ,&&\forall (u, v, p) \in \mathcal{B}. \label{5}
\end{eqnarray}
Then $\lambda\geq0,$ $\gamma\geq 0,$ and $\nu\geq 0.$
Indeed, suppose to the contrary that $\lambda_j<0$ for some $j$.
From \eqref{4}, we have $\lambda^{\top}u+\gamma v+\nu p\to -\infty$ as $u_j\to +\infty,$ contradicting \eqref{4}.
Thus $\lambda_i\geq 0$ for all $i$.
The same argument with $v$ and $p$ yields $\gamma\geq 0$ and $\nu\geq 0$.
From~\eqref{5}, we have $\nu p \leq \alpha $ for all $ p<p^{*} ,$ and hence, taking the supremum over $p<p^*$ gives $\nu p^*\leq \alpha$.
Under the substitution $x={s}/{t},$ for any $(x,t)$, let $(u_{i},v,p)=(t\, h_{i}(x),1-t\, g(x),t\, f(x)),$ by \eqref{4} we have
\begin{equation}\label{6}
\sum_{i=1}^{m} \lambda_{i} t\, h_{i}(x)+\gamma (1-t\, g(x))+\nu t\, f(x) \geq \alpha \geq \nu p^{*} .
\end{equation}
Assume that $\nu>0.$
Dividing \eqref{6} by $\nu$ yields
\begin{equation}\label{6-1}
t\left(f(x)+\sum_{i=1}^{m}\bar{\lambda}_{i}h_{i}(x)-\bar{\gamma} g(x)\right)+\bar{\gamma} \geq p^{*} ,
\end{equation}
where $\bar{\lambda}_{i}:=\frac{\lambda_{i}}{\nu},$ $i=1,\ldots,m,$ and $\bar{\gamma}:=\frac{\gamma}{\nu}.$
Then we have
\begin{equation*}
f(x)+\sum_{i=1}^{m}\bar{\lambda}_{i}h_{i}(x)-\bar{\gamma} g(x)\geq 0
\end{equation*}
for all $x\in \mathbb{R}^{n}.$
Indeed, if $f(x)+\sum_{i=1}^{m}\bar{\lambda}_{i}h_{i}(x)-\bar{\gamma} g(x)<0$ for some $x\in\mathbb{R}^n,$ then letting $t\to\infty$ in \eqref{6-1}, we obtain
$t\left(f(x)+\sum_{i=1}^{m}\bar{\lambda}_{i}h_{i}(x)-\bar{\gamma} g(x)\right)+\bar{\gamma}\to-\infty,$ contradicting~\eqref{6-1}.
So, $(\bar\lambda,\bar\gamma)$ is feasible for the problem~\eqref{LD}.
Furthermore, letting $t\to 0^+$ in \eqref{6-1} yields $\bar\gamma\geq p^*$.
Consequently, it follows that $\max \eqref{LD} \geq \inf \eqref{PCCT}.$
By weak duality we have  $\sup \eqref{LD} \leq \inf \eqref{PCCT},$ and thus $\max \eqref{LD} = \inf \eqref{PCCT}.$
This shows that strong duality holds and that the dual optimum is attained in the case $\nu>0.$

Suppose now that $\nu=0$.
Then~\eqref{6} gives
\begin{equation}\label{6-2}
t\left(\sum_{i=1}^m\lambda_i h_i(x)-\gamma g(x)\right)+\gamma\geq0 \quad \forall x\in\mathbb{R}^n,\ t>0.
\end{equation}
Let $\hat{x}$ be a Slater point for problem~\eqref{FP}.
Then
\begin{equation*}
h_i(\hat{x})<0,\quad i=1,\ldots,m, \quad g(\hat{x})>0.
\end{equation*}
Since $\lambda\geq0$ and $\gamma\geq0$, if $(\lambda,\gamma)\neq(0,0)$, then
\begin{equation*}
\sum_{i=1}^m\lambda_i h_i(\hat{x})-\gamma g(\hat{x})<0.
\end{equation*}
Putting $x=\hat{x}$ in~\eqref{6-2} and letting $t\to+\infty$ yields a contradiction.
Hence $(\lambda,\gamma)=(0,0)$, which contradicts $(\lambda,\gamma,\nu)\neq 0$.
Therefore, $\nu>0$.
\end{proof}

Next, we construct an SOS relaxation of problem~\eqref{LD} using the high-degree perturbation method of Lasserre and Netzer~\cite{Lasserre2007}.
To this end, we impose the following assumption.
\begin{itemize}
\item[\textbf{(A3)}] The feasible set $K$ in~\eqref{K} is bounded.
\end{itemize}
Since $K$ is closed and bounded, assumption {\bf (A3)} implies that $K$ is compact.
Moreover, since $g>0$ on $K$, we have
\begin{equation*}
g_{\min}:=\min_{x\in K}g(x)>0.
\end{equation*}
Choose
\begin{equation*}
L>\max_{x\in K}\|x\|\quad\text{and}\quad C>\frac{2}{g_{\min}}.
\end{equation*}
For each integer $r\geq d$, define
\begin{equation*}
\Theta_r(x):=\sum_{j=1}^n\left(\frac{x_j}{L}\right)^{2r}.
\end{equation*}
Then $\Theta_r(x)<1$ for all $x\in K$.

For each integer $r\geq d$, consider the following SOS relaxation:
\begin{align}\label{D_r}
\sup_{\lambda,\gamma,\eta}\quad& \gamma-C\eta \tag{${\rm D}_r$}\\
{\rm s.t.}\quad& f+\sum_{i=1}^{m}\lambda_i h_i-\gamma g+\eta(1+\Theta_r) \in\Sigma[x]_{2r},\nonumber\\
&\lambda_i\geq0,\ i=1,\ldots,m,\ \gamma\geq0,\ \eta\geq0.\notag
\end{align}
By Proposition~\ref{prop2.1}, the SOS constraint in~\eqref{D_r} admits a Gram-matrix representation.
Hence, problem~\eqref{D_r} is an SDP.

The following theorem shows that the sequence of optimal values associated with problems~\eqref{D_r} converges asymptotically to the global minimum of problem~\eqref{FP}.
\begin{theorem}\label{Thm4.3}
Let $f,-g,h_1,\ldots,h_m:\mathbb{R}^n\to\mathbb{R}$ be convex polynomials.
Suppose that assumptions {\bf (A1)-(A3)} hold.
If the problem \eqref{FP} satisfies the Slater condition, then
\begin{equation*}
\lim_{r \rightarrow \infty} \sup \eqref{D_r}= \max \eqref{LD}= \inf \eqref{FP}.
\end{equation*}
\end{theorem}
\begin{proof}
First, we prove that
\begin{equation*}
\sup\eqref{D_r}\leq\inf\eqref{FP}
\end{equation*}
for every $r\geq d$.
Let $(\lambda, \gamma, \eta)$ be a feasible solution to the problem~\eqref{D_r}.
Then
\begin{equation*}
f(x)+\sum_{i=1}^{m} \lambda_{i} h_{i}(x)-\gamma g(x)+\eta\left(1+\Theta_{r}(x)\right) \geq 0 ,\, \forall x \in \mathbb{R}^{n} .
\end{equation*}
For any $x \in K ,$ since $\lambda_{i} \geq 0 $ and  $h_{i}(x) \leq 0 ,$ we obtain
\begin{equation*}
f(x)-\gamma g(x)+\eta\left(1+\Theta_{r}(x)\right) \geq 0, \, \forall x \in K
\end{equation*}
which gives
\begin{equation*}
\frac{f(x)}{g(x)} \geq \gamma-\eta \cdot \frac{1+\Theta_{r}(x)}{g(x)}  \geq \gamma-\eta \cdot \frac{2}{\min_{x \in K}\{g(x)\}} \geq \gamma-C\eta,\,  \forall x \in K.
\end{equation*}
Taking the infimum over $x \in K$ gives $\gamma-C \eta \leq \inf \eqref{FP} .$
Since $(\lambda, \gamma, \eta)$ is arbitrary, we conclude $\sup \eqref{D_r} \leq \inf \eqref{FP} .$

Next, we prove that $\lim_{r \rightarrow \infty} \sup \eqref{D_r}= \max \eqref{LD}.$
Suppose that $(\lambda^{*}, \gamma^{*})$ is an optimal solution to the problem \eqref{LD}.
Then
\begin{equation*}
\Phi(x):=f(x)+\sum_{i=1}^{m} \lambda_{i}^{*} h_{i}(x)-\gamma^{*} g(x) \geq 0, \quad \forall x \in \mathbb{R}^{n},
\end{equation*}
and $\Phi(x)$ is a convex polynomial.
Define $\tilde{\Phi}(z):=\Phi(Lz)$ for $z \in \mathbb{R}^{n}.$
Then $\tilde{\Phi}$ is a convex polynomial satisfying  $\tilde{\Phi}(z) \geq 0$ for all $z \in \mathbb{R}^{n} ,$ and in particular, $\tilde{\Phi}(z) \geq 0$ for all  $z \in[-1,1]^{n} .$
By Lemma \ref{lemma2.1} there exist  $\varepsilon_{r}^{*}\geq0$ with $\varepsilon_{r}^{*} \searrow 0 $ as $r \rightarrow \infty$ such that
\begin{equation*}
\tilde{\Phi}(z)+\varepsilon_{r}^{*}\left(1+\sum_{j=1}^{n} z_{j}^{2r}\right) \in \Sigma[z]_{2r}.
\end{equation*}
Substituting $z=x/L$ yields
\begin{equation*}
\Phi(x)+\varepsilon_{r}^{*}\left(1+\sum_{j=1}^{n}\left(x_{j} / L\right)^{2r}\right) \in \Sigma[x]_{2r},
\end{equation*}
that is,
\begin{equation*}
f(x)+\sum_{i=1}^{m} \lambda_{i}^{*} h_{i}(x)-\gamma^{*}g(x)+\varepsilon_{r}^{*}\left(1+\Theta_{r}(x)\right) \in \Sigma[x]_{2r} .
\end{equation*}
Hence $(\lambda^{*},\gamma^{*},\varepsilon_r^{*})$ is feasible for problem~\eqref{D_r} with objective value $\gamma^{*}-C\varepsilon_r^{*}$.
Therefore,
\begin{equation*}
\gamma^{*}-C\varepsilon_r^{*}\leq \sup\eqref{D_r}\leq \gamma^{*}.
\end{equation*}
Since $\varepsilon_r^{*}\to0$ as $r\to\infty$, we obtain
\begin{equation*}
\lim_{r\to\infty}\sup\eqref{D_r}=\gamma^{*}.
\end{equation*}
The desired conclusion now follows from Theorem~\ref{Thm4.2}.
\end{proof}

At this point, we would like to mention that, even though Theorem \ref{Thm4.3} establishes the asymptotic convergence, the optimal value of the problem~\eqref{D_r} only provides a lower bound and does not recover an optimal solution to the problem~\eqref{FP} directly.
In order to extract optimal solutions to the primal problem~\eqref{FP}, we now consider the following Lagrangian dual of the problem~\eqref{D_r},
\begin{align}\label{Q_r}
\inf_{y\in\mathbb{R}^{s(n,2r)}}\quad
& L_y(f)  \tag{${\rm Q}_r$}\\
\text{s.t.}\hspace{8mm}
& L_y(h_i)  \le 0,\quad i=1,\ldots,m, \nonumber \\
& L_y(1+\Theta_r) \le C, \nonumber \\
& 1 - L_y(g) \le 0,\nonumber \\
&\mathbf{M}_r(y)\succeq 0. \nonumber
\end{align}

\begin{theorem}\label{Thm4.4}
Suppose that assumptions {\bf (A1)-(A3)} hold.
If the problem~\eqref{FP} satisfies the Slater condition$,$ then for every integer $r\geq d$,
\begin{equation*}
\max\eqref{D_r}=\inf\eqref{Q_r}.
\end{equation*}
Furthermore$,$ suppose that $\bar{y}$ is an optimal solution to the problem \eqref{Q_r} with $\bar{y}_{0} \neq 0 .$ If
\begin{equation*}
\operatorname{rank} \mathbf{M}_r(\bar y)=1,
\end{equation*}
then
\begin{equation*}
\bar{x}:=\frac{1}{\bar{y}_{0}}\left(L_{\bar{y}}\left(x_{1}\right),\ldots,L_{\bar{y}}\left(x_{n}\right)\right)
\end{equation*}
is an optimal solution to the problem~\eqref{FP}.
\end{theorem}

\begin{proof}
It suffices to show that the problem~\eqref{Q_r} admits a strictly feasible solution.
Since the problem~\eqref{FP} satisfies the Slater condition, by the proof of Theorem~\ref{Thm4.2}, the problem~\eqref{PCCT} admits a strictly feasible solution.
That is, there exists $(\hat{s},\hat{t})\in \mathbb{R}^{n}\times (0,+\infty)$ such that $\hat{t}\,h_{i}(\frac{\hat{s}}{\hat{t}})<0,$ $i=1,\ldots,m,$ and $1-\hat{t}\,g(\frac{\hat{s}}{\hat{t}})<0.$
Denote $\hat{x}:=\hat{s}/\hat{t}$.
In the proof of Theorem~\ref{Thm4.2}, we may choose
\begin{equation*}
\hat{t}=\frac{1+\varepsilon_{0}}{g(\hat{x})}
\end{equation*}
for any sufficiently small $\varepsilon_{0}>0.$
Moreover, due to the continuity of the functions $h_{i},$ $i=1,\ldots,m,$ $g,$ and $\Theta_{r},$ there exists an open ball
\begin{equation*}
\mathbb{B}:=\mathbb{B}((\hat{s},\hat{t}),\delta)\subset \mathbb{R}^{n}\times(0,+\infty)
\end{equation*}
such that, for all $(s,t)\in\mathbb{B},$
\begin{equation*}
t\,h_{i}\left(\frac{s}{t}\right)<0,\quad i=1,\ldots,m, \quad 1-t\,g\left(\frac{s}{t}\right)<0, \quad \Theta_{r}\left(\frac{s}{t}\right)<1.
\end{equation*}

Let $\mathcal B(\mathbb R^n)$ and $\mathcal B(\mathbb B)$ denote the Borel $\sigma$-algebras on $\mathbb R^n$ and $\mathbb B\subset\mathbb R^n\times(0,\infty)$, respectively.
Let $\mu$ be the normalized Lebesgue measure on $\mathbb{B}$ (i.e., $\mu(\mathbb{B})=1$).
Define the measurable mapping $\pi:\mathbb{B}\to\mathbb R^n$ by
\begin{equation*}
\pi(s,t):=\frac{s}{t}
\end{equation*}
and the positive Borel measure $\rho$ on $\mathbb{B}$ by $d\rho:=t\,d\mu$.
Note that since $\pi(s,t)=s/t$ is continuous on $\mathbb B$, it is $(\mathcal B(\mathbb B),\mathcal B(\mathbb R^n))$-measurable.
Moreover, define $\nu:=\rho\circ \pi^{-1}$ by
\begin{equation*}
\nu(E):=\rho(\pi^{-1}(E)), \quad E\in\mathcal{B}(\mathbb{R}^n).
\end{equation*}
Then, by \cite[Theorem~3.6.1]{Bogachev2007}, define $y=(y_\alpha)_{\alpha\in\mathbb N_{2r}^n}$ by
\begin{equation*}
y_\alpha:=\int_{\pi(\mathbb{B})}x^\alpha\,d\nu(x)=\int_{\mathbb{B}}t\left(\frac{s}{t}\right)^{\alpha}d\mu(s,t), \quad \alpha\in\mathbb{N}^n_{2r}.
\end{equation*}
Then we have
\begin{align*}
L_y(h_i)
&=\sum_{\alpha\in\mathbb{N}^n_{2r}}y_\alpha(h_i)_\alpha=\int_{\mathbb{B}}t\,h_i(s/t)\,d\mu(s,t)<0, \quad i=1,\ldots,m,\\
1-L_y(g)&=1-\sum_{\alpha\in\mathbb{N}^n_{2r}}y_\alpha g_\alpha=\int_{\mathbb{B}}\left(1-t\,g(s/t)\right)d\mu(s,t)<0.
\end{align*}

Define
\begin{equation*}
\bar{t}:=\int_{\mathbb{B}}t\,d\mu(s,t).
\end{equation*}
Since $\Theta_{r}(s/t)<1$ for all $(s,t)\in\mathbb{B},$ we have
\begin{equation*}
L_y(1+\Theta_r)=\int_{\mathbb{B}}t\left(1+\Theta_r(s/t)\right)d\mu(s,t)<2\int_{\mathbb{B}}t\,d\mu(s,t)=2\bar{t}.
\end{equation*}
Since
\begin{equation*}
\frac{2}{g(\hat{x})}\leq\frac{2}{g_{\min}}<C,
\end{equation*}
we can choose $\varepsilon_{0}>0$ and $\delta>0$ sufficiently small so that $2\bar{t}<C$.
Hence,
\begin{equation*}
L_y(1+\Theta_r)<2\bar{t}<C.
\end{equation*}

Moreover, let $\mathbf{q}\in\mathbb{R}^{s(n,r)}\setminus\{0\}$ and define
\begin{equation*}
q(x):=\mathbf{q}^{T}\lceil x\rceil_r.
\end{equation*}
Then
\begin{equation*}
\mathbf{q}^{T}\left(\mathbf{M}_r(y)\right)\mathbf{q}=L_y(q^2)=\int_{\mathbb{B}}t\left\{q\left(\frac{s}{t}\right)\right\}^{2}d\mu(s,t)>0.
\end{equation*}
Indeed, since $\mathbb{B}$ is open and $q$ is a nonzero polynomial, $q(s/t)$ cannot vanish almost everywhere on $\mathbb{B}$.
Therefore,
\begin{equation*}
\mathbf{M}_r(y)\succ0.
\end{equation*}
Thus, $y$ is a strictly feasible solution to the problem~\eqref{Q_r}.
It follows from the strong duality for semidefinite programming \cite[Theorem~2.2]{Klerk2002} that
\begin{equation*}
\max\eqref{D_r}=\inf\eqref{Q_r}.
\end{equation*}

To prove the second assertion, let $\bar{y}$ be an optimal solution to the problem~\eqref{Q_r} with $\bar{y}_{0}\neq0,$ and suppose that
\begin{equation*}
\operatorname{rank}\mathbf{M}_r(\bar y)=1.
\end{equation*}
Since $\mathbf{M}_r(\bar y)\succeq0$, its $(0,0)$-entry satisfies
\begin{equation*}
\bar{y}_{0}=\left[\mathbf{M}_r(\bar y)\right]_{0,0}\geq0.
\end{equation*}
Since $\bar{y}_{0}\neq0,$ it follows that $\bar{y}_{0}>0.$

Since $\mathbf{M}_r(\bar y)$ is a rank-one positive semidefinite moment matrix, its Hankel structure implies that
\begin{equation*}
\bar{y}_{\alpha}=\bar{y}_{0}\bar{x}^{\alpha},\quad \alpha\in\mathbb{N}_{2r}^{n},
\end{equation*}
where
\begin{equation*}
\bar{x}_{i}:=\frac{\bar{y}_{e_i}}{\bar{y}_{0}},\quad i=1,\ldots,n.
\end{equation*}
Since $\bar{y}$ is feasible for the problem~\eqref{Q_r}, for each $i=1,\ldots,m,$
\begin{equation*}
L_{\bar y}(h_i)=\sum_{\alpha\in\mathbb{N}_{2r}^{n}}\bar{y}_{\alpha}(h_i)_{\alpha}=\bar{y}_{0}h_i(\bar{x})\leq0.
\end{equation*}
Since $\bar{y}_{0}>0,$ we obtain
\begin{equation*}
h_i(\bar{x})\leq0, \quad i=1,\ldots,m.
\end{equation*}
Hence, $\bar{x}$ is a feasible solution to the problem~\eqref{FP}.

From the constraint in~\eqref{Q_r}, we have
\begin{equation*}
1-L_{\bar y}(g)=1-\sum_{\alpha\in\mathbb{N}_{2r}^{n}}\bar{y}_{\alpha}g_{\alpha}=1-\bar{y}_{0}g(\bar{x})\leq0.
\end{equation*}
Thus, we have
\begin{equation*}
\bar{y}_{0}\geq\frac{1}{g(\bar{x})}.
\end{equation*}
This implies that
\begin{equation*}
\inf\eqref{FP}\geq\inf\eqref{Q_r}=L_{\bar y}(f)=\sum_{\alpha\in\mathbb{N}_{2r}^{n}}\bar{y}_{\alpha}f_{\alpha}=\bar{y}_{0}f(\bar{x})\geq\frac{f(\bar{x})}{g(\bar{x})}\geq\inf\eqref{FP}.
\end{equation*}
Therefore,
\begin{equation*}
\bar{x}:=\frac{1}{\bar{y}_{0}}\left(L_{\bar{y}}\left(x_{1}\right),\ldots,L_{\bar{y}}\left(x_{n}\right)\right)
\end{equation*}
is an optimal solution to the problem~\eqref{FP}.
\end{proof}

\subsection{A simple illustrative example}

Let $\psi:\mathbb R^2\to\mathbb R$ be defined by
\begin{equation*}
\psi(x_1,x_2):=p(x_1,x_2,1-\frac{1}{2}x_2),
\end{equation*}
where
\begin{align*}
p(x_1,x_2,x_3):={}&77x_1^6-155x_1^5x_2+445x_1^4x_2^2+76x_1^3x_2^3+556x_1^2x_2^4+68x_1x_2^5+240x_2^6\\
&-9x_1^5x_3-1129x_1^3x_2^2x_3+62x_1^2x_2^3x_3+1206x_1x_2^4x_3-343x_2^5x_3\\
&+363x_1^4x_3^2+773x_1^3x_2x_3^2+891x_1^2x_2^2x_3^2-869x_1x_2^3x_3^2+1043x_2^4x_3^2\\
&-14x_1^3x_3^3-1108x_1^2x_2x_3^3-216x_1x_2^2x_3^3-839x_2^3x_3^3+721x_1^2x_3^4\\
&+436x_1x_2x_3^4+378x_2^2x_3^4+48x_1x_3^5-97x_2x_3^5+89x_3^6.
\end{align*}
is the polynomial given in \cite[Theorem 5.8]{Ahmadi2013}.
By \cite[Theorem 5.8]{Ahmadi2013}, $\psi$ is convex but not SOS-convex.

Let $x^*:=(0,0)$ and define
\begin{equation*}
\phi(x):=\psi(x)-\psi(x^*)-\nabla\psi(x^*)^{T}(x-x^*).
\end{equation*}
Since $\psi$ is convex, its first-order characterization gives $\phi(x)\geq 0$ for all $x\in\mathbb R^2$.
Moreover, $\phi(x^*)=0$ and $\nabla\phi(x^*)=0$.
Since adding an affine polynomial does not change the Hessian, $\phi$ is convex but not SOS-convex.

Consider the following problem:
\begin{equation}\label{FP1}
\min_{x\in\mathbb R^2}\left\{\frac{f(x)}{g(x)}\mid h_i(x)\leq 0,\ i=1,2,3\right\},\tag{FP$_1$}
\end{equation}
where $f(x):=8+\phi(x),$ $g(x):=8-x_1^2-x_2^2,$ $h_1(x):=x_1^2+x_2^2-4,$ $h_2(x):=-x_1,$ and $h_3(x):=-x_2.$
Clearly, the feasible set is given by
\begin{equation*}
K:=\left\{x\in\mathbb R^2\mid x_1^2+x_2^2-4\leq 0,\ -x_1\leq 0,\ -x_2\leq 0\right\}.
\end{equation*}
The polynomials $f$, $-g$, and $h_i$, $i=1,2,3$, are convex, while $f$ is not SOS-convex.
Moreover, $f(x)=8+\phi(x)\geq 8>0$ and $g(x)>0$ for all $x\in K$.

Let $\hat{x}:=(1/2,1/2)$.
Then
\begin{equation*}
h_1(\hat{x})=-\frac{7}{2}<0, \quad h_2(\hat{x})=-\frac12<0, \quad h_3(\hat{x})=-\frac12<0,
\end{equation*}
and so, the Slater condition is satisfied.

Now, we apply the SDP relaxation established in Section~\ref{sect::4}.
Since $\deg f=6$, we take $r\geq 3$.
Moreover, since $\max_{x\in K}\|x\|=2$ and $\min_{x\in K}g(x)=4,$ we choose $L=3$ and $C=1>2/4=1/2$.
Thus, 
\begin{equation*}
\Theta_r(x)=\frac{x_1^{2r}+x_2^{2r}}{3^{2r}}.
\end{equation*}

We now consider the following SOS relaxation:
\begin{align}\label{D1}
\sup_{\lambda_i,\gamma,\eta}\quad &\gamma-\eta \tag{D$_1$}\\
{\rm s.t.}\quad & f+\lambda_1h_1+\lambda_2h_2+\lambda_3h_3-\gamma g+\eta(1+\Theta_r)\in\Sigma[x_1,x_2]_{2r},\nonumber\\
&\lambda_i\geq0,\ i=1,2,3, \ \gamma\geq0,\ \eta\geq0.\nonumber
\end{align}

Starting from the first level $r=d=3$, the perturbation term is
\begin{equation*}
\Theta_3(x)=\frac{x_1^6+x_2^6}{729}.
\end{equation*}
Then, the corresponding Lagrangian dual of problem~\eqref{D1} at $r=3$ is as follows:
\begin{align}\label{Q1}
\inf_{y\in\mathbb R^{s(2,6)}}\quad &8y_{00}+L_y(\phi) \tag{Q$_1$}\\
{\rm s.t.}\quad &1+y_{20}+y_{02}-8y_{00}\leq0, \nonumber\\
&y_{20}+y_{02}-4y_{00}\leq0, \nonumber\\
&-y_{10}\leq0, \ -y_{01}\leq0,\nonumber\\
&y_{00}+\frac{y_{60}+y_{06}}{729}\leq1,\nonumber\\
&\mathbf M_3(y)\succeq0.\nonumber
\end{align}

Using CVX \cite{Grant2013CVX} in MATLAB, we obtained an approximate optimal value of $1$, and an approximate optimal solution $\bar y$ for problem~\eqref{Q1}.
In particular,
\begin{equation*}
\bar y_{00}\approx 0.1250, \quad \bar y_{10}\approx0, \quad \bar y_{01}\approx0.
\end{equation*}
The moment matrix $\mathbf M_3(\bar y)$ was numerically observed to have rank one, indicating numerical satisfaction of the rank-one condition in Theorem~\ref{Thm4.4}. 
The extracted point is
\begin{equation*}
\bar x:=\left(\frac{\bar y_{10}}{\bar y_{00}},\frac{\bar y_{01}}{\bar y_{00}}\right)\approx(0,0).
\end{equation*}

Motivated by this numerical extraction, we next verify analytically that the candidate point $\bar x=(0,0)$ is an exact optimal solution to problem~\eqref{FP1}.

Since $\phi(x)\geq0$ for all $x\in\mathbb R^2$, we have
\begin{equation*}
f(x)=8+\phi(x)\geq8
\end{equation*}
for all $x\in K$.
Moreover, since
\begin{equation*}
g(x)=8-x_1^2-x_2^2\leq8
\end{equation*}
for all $x\in K$ and $g(x)>0$ on $K$, it follows that
\begin{equation*}
\frac{f(x)}{g(x)}\geq\frac{8}{g(x)}\geq\frac{8}{8}=1, \ x\in K.
\end{equation*}
On the other hand, at $\bar x=(0,0)$, we have $\phi(\bar x)=0$ and hence
\begin{equation*}
\frac{f(\bar x)}{g(\bar x)}=1.
\end{equation*}
Therefore, $\bar x=(0,0)$ is an optimal solution to problem~\eqref{FP1}, with optimal value $1$.

\subsection{The SOS-convex case}\label{sos-convex}

In this subsection, we assume that all functions $f,-g,h_1,\ldots,h_m$ are SOS-convex polynomials.
Then the perturbation term is no longer needed, and exactness is attained at the first level of the hierarchy.
We present the related specialized problems and results for this case.

We now consider problem~\eqref{FP} under the additional assumption that $f,-g,h_1,\ldots,h_m$ are SOS-convex.
Under assumptions {\bf (A1)} and {\bf (A2)}, consider the following SOS relaxation of problem~\eqref{LD}:
\begin{align}\label{D}
\sup_{\lambda, \gamma} \quad & \gamma \tag{${\rm {D}^\prime}$} \\
\mathrm{s.t.} \quad
& f + \sum_{i=1}^{m} \lambda_i h_i - \gamma\, g \in \Sigma[x]_{2d},\nonumber \\
& \lambda_i \ge 0,\ i = 1, \ldots, m, \, \gamma \ge 0,\nonumber
\end{align}
and its Lagrangian dual reads:
\begin{align}\label{Q}
\inf_{y \in \mathbb{R}^{s(n,2d)}} \quad
& L_y(f) \tag{${\rm {Q}^\prime}$} \\
\rm{s.t.} \hspace{8mm}
& 1 - L_y(g) \le 0, \nonumber \\
& L_y(h_i) \le 0,\, i = 1, \ldots, m,  \nonumber \\
& \mathbf{M}_d(y)\succeq 0. \nonumber
\end{align}

As a direct consequence of the convex case, we obtain the following strong duality and solution extraction results for the SOS-convex case.
Since the perturbation term $\Theta_r$ is no longer needed, the proof uses arguments similar to those of Theorems~\ref{Thm4.3} and~\ref{Thm4.4}.
We provide the details for completeness.
\begin{theorem}
Let $f,-g, h_{1}, \ldots, h_{m}: \mathbb{R}^{n} \rightarrow \mathbb{R}$ be SOS-convex polynomials of degree at most $2d.$
Suppose that assumptions {\rm {\bf (A1)}} and {\rm {\bf (A2)}} hold.
If the problem~\eqref{FP} satisfies the Slater condition, then
\begin{equation*}
\inf \eqref{FP} =\max \eqref{D}=\inf \eqref{Q}.
\end{equation*}
Moreover$,$ if $\bar{y}$ is an optimal solution to the problem~\eqref{Q} with $\bar{y}_{0} \neq 0,$ then
\begin{equation*}
\bar{x}:=\frac{1}{\bar{y}_{0}}\left(L_{\bar{y}}\left(x_{1}\right),\ldots,L_{\bar{y}}\left(x_{n}\right)\right)
\end{equation*}
is an optimal solution to the problem~\eqref{FP}.
\end{theorem}

\begin{proof}
By Theorem~\ref{Thm4.2} and Lemmas~\ref{lemma3.1} and~\ref{lemma3.2}, we have
\begin{equation*}
\max\eqref{LD}=\inf\eqref{FP}.
\end{equation*}
Let $(\lambda^*,\gamma^*)$ be an optimal solution to problem~\eqref{LD}, and define
\begin{equation*}
p:=f+\sum_{i=1}^{m}\lambda_i^*h_i-\gamma^*g.
\end{equation*}
Then $p$ is a globally nonnegative SOS-convex polynomial.
By Lemma~\ref{lemma2.4}, $p$ attains its minimum at some $u\in\mathbb{R}^n$.
Hence,
\begin{equation*}
p(u)\geq0 \quad\text{and}\quad \nabla p(u)=0.
\end{equation*}
Since $p-p(u)$ is SOS-convex and satisfies
\begin{equation*}
\left(p-p(u)\right)(u)=0 \quad\text{and}\quad \nabla\left(p-p(u)\right)(u)=0,
\end{equation*}
it follows from \cite[Lemma~8]{Helton2010} that $p-p(u)$ is SOS.
Since $p(u)\geq0$, we conclude that $p$ is SOS.
Therefore, $(\lambda^*,\gamma^*)$ is feasible for problem~\eqref{D}, and hence
\begin{equation*}
\max\eqref{D}\geq\gamma^*=\max\eqref{LD}.
\end{equation*}
On the other hand, every feasible solution to problem~\eqref{D} is feasible for problem~\eqref{LD}, and thus
\begin{equation*}
\max\eqref{D}\leq\max\eqref{LD}.
\end{equation*}
Consequently,
\begin{equation*}
\max\eqref{D}=\max\eqref{LD}=\inf\eqref{FP}.
\end{equation*}

By the same strict-feasibility argument as in the proof of Theorem~\ref{Thm4.4}, with the perturbation constraint omitted, problem~\eqref{Q} admits a strictly feasible solution.
It follows from the strong duality for semidefinite programming
\cite[Theorem~2.2]{Klerk2002} that
\begin{equation*}
\max\eqref{D}=\inf\eqref{Q}.
\end{equation*}

To prove the solution extraction, let $\bar{y}$ be an optimal solution to the problem~\eqref{Q}.
Since
\begin{equation*}
\mathbf{M}_d(\bar y)\succeq0,
\end{equation*}
its $(0,0)$-entry satisfies $\bar y_0\geq0$.
Since $\bar y_0\neq0$, we have $\bar y_0>0$.
Then, we have
\begin{align*}
& 1-L_{\bar y}(g)=1-\sum_{\alpha} \bar{y}_{\alpha} g_{\alpha} \leq 0 ,\\
& L_{\bar y}(h_i)=\sum_{\alpha} \bar{y}_{\alpha} (h_{i})_{\alpha} \leq 0, \quad i=1, \ldots, m, \\
& \mathbf{M}_d(\bar y) \succeq 0.
\end{align*}
Putting
\begin{equation*}
\bar z_\alpha:=\frac{\bar y_\alpha}{\bar y_0},\quad \alpha\in\mathbb N_{2d}^n,
\end{equation*}
we have
\begin{align}
&\frac{1}{\bar y_0}-L_{\bar z}(g)\leq0, \label{11}\\
&L_{\bar z}(h_i)\leq0, \quad i=1,\ldots,m, \label{12}\\
&\mathbf M_d(\bar z)\succeq0,\quad \bar z_0=1. \label{13}
\end{align}
It follows from~\eqref{12} that for each $i=1,\ldots,m,$
\begin{equation*}
0\geq\sum_{\alpha}\bar{z}_{\alpha}(h_i)_\alpha=L_{\bar z}(h_i).
\end{equation*}
Since $\bar z$ satisfies~\eqref{13}, by Lemma~\ref{lemma2.3} we see that for each $i=1,\ldots,m,$
\begin{equation*}
0\geq L_{\bar z}(h_i)\geq h_i\left(L_{\bar z}(x_1),\ldots,L_{\bar z}(x_n)\right)=h_i(\bar x).
\end{equation*}
So, $\bar{x}$ is a feasible solution to the problem~\eqref{FP}.

Moreover, applying Lemma~\ref{lemma2.3} to the SOS-convex polynomial $-g$, we have
\begin{equation*}
L_{\bar z}(-g)\geq-g(\bar x),
\end{equation*}
and hence, $L_{\bar z}(g)\leq g(\bar x)$.
Therefore, by~\eqref{11}, we have
\begin{equation*}
\frac{1}{\bar{y}_{0}}\leq\sum_{\alpha\in\mathbb{N}_{2d}^{n}}\bar{z}_{\alpha}g_{\alpha}=L_{\bar z}(g)\leq g(\bar x),
\end{equation*}
i.e.,
\begin{equation*}
\bar y_0\geq\frac{1}{g(\bar x)}.
\end{equation*}

Applying Lemma~\ref{lemma2.3} to $f$, we obtain
\begin{equation*}
L_{\bar z}(f)\geq f(\bar x).
\end{equation*}
This implies that
\begin{equation*}
\inf\eqref{Q}=L_{\bar y}(f)=\bar y_0L_{\bar z}(f)\geq\bar y_0f(\bar x)\geq\frac{f(\bar x)}{g(\bar x)}\geq\inf\eqref{FP}=\inf\eqref{Q}.
\end{equation*}
Thus,
\begin{equation*}
\bar{x}:=\frac{1}{\bar{y}_{0}}\left(L_{\bar{y}}\left(x_{1}\right),\ldots,L_{\bar{y}}\left(x_{n}\right)\right)
\end{equation*}
is an optimal solution to the problem~\eqref{FP}.
\end{proof}

\section{Conclusions}\label{sect::5}

Under the Slater condition, asymptotic convergence and solution extraction under a rank-one condition have been established.
As an important special case, we have shown that the perturbation term is unnecessary for SOS-convex polynomial data and that the first SDP relaxation is exact.
It would be interesting to extend the CCT-based approach to fractional programs with nonconvex polynomial data.
This is left for future research.




\subsection*{Acknowledgments}
A part of this work was completed while the three authors were visiting the Vietnam Institute for Advanced Study in Mathematics (VIASM).
They are grateful to the Institute for its hospitality and support.
The authors also wish to thank Prof. Feng Guo of Dalian University of Technology for suggesting the use of the high-degree perturbation method and for many fruitful discussions.

\subsection*{Availability of data and materials}
No datasets were generated in this study.


\subsection*{Conflict of interest}
The authors have no conflicts of interest to declare.

\small


\end{document}